
\documentclass[reqno,12pt]{amsart}%
\usepackage[left=2cm,right=2cm,top=2.5cm,bottom=2.5cm]{geometry}
\usepackage{xcolor}
\usepackage[colorlinks=true,linkcolor=blue]{hyperref}
\usepackage{eurosym}
\usepackage{amssymb}
\usepackage{amsmath}
\usepackage{amsfonts}
\usepackage{graphicx}
\usepackage{xypic}
\usepackage{epsfig}
\usepackage{stmaryrd}%
\setcounter{MaxMatrixCols}{30}
\providecommand{\U}[1]{\protect \rule{.1in}{.1in}}
\providecommand{\U}[1]{\protect \rule{.1in}{.1in}}
\theoremstyle{plain}
\newtheorem{theorem}{Theorem}[section]

\newtheorem{corollary}[theorem]{Corollary}

\newtheorem{definition}[theorem]{Definition}

\newtheorem{lemma}[theorem]{Lemma}

\newtheorem{proposition}[theorem]{Proposition}
\newtheorem{remark}[theorem]{Remark}

\numberwithin{equation}{section}
\begin{document}
\title[Lattice sequence spaces and summing mappings]{Lattice sequence spaces and summing mappings}
\author{Dahmane Achour }
\address{Laboratory of Functional Analysis and Geometry of Spaces, Faculty of
Mathematics and Computer Science. University Mohamed Boudiaf of M'sila-Algeria
PB 166 Ichebilia, M'sila 28000, Algeria\\
ID: https://orcid.org/0000-0003-0916-862X}
\email{dahmane.achour@univ-msila.dz}
\author{Toufik Tiaiba}
\address{Laboratory of Functional Analysis and Geometry of Spaces, Faculty of
Mathematics and Computer Science. University Mohamed Boudiaf of M'sila-Algeria
PB 166 Ichebilia, M'sila 28000, Algeria\\
ID: https://orcid.org/0009-0004-0040-5224}
\email{toufik.tiaiba@univ-msila.dz}
\thanks{The authors acknowledge with thanks the support of the General Direction of
Scientific Research and Technological Development (DGRSDT), Algeria.}
\subjclass[2010]{ Primary 46B28, 46T99; Secondary 47H99, 47L20}
\keywords{Lattice sequence spaces, positive $(p,q)$-summing operators, positive stongly
$(p,q)$-summing operators.}

\begin{abstract}
The objective of this study is to advance the theory concerning positive
summing operators. Our focus lies in examining the space of positive strongly
$p$-summable sequences and the space of positive unconditionally $p$-summable
sequences. We utilize these in conjunction with the Banach lattice of positive
weakly $p$-summable sequences to present and characterize the classes of
positive strongly $(p,q)$-summing operators, positive $(p,q)$-summing, and
positive Cohen $(p,q)$-nuclear operators. Additionally, we describe these
classes in terms of the continuity of an associated tensor operator that is
defined between tensor products of sequences spaces.

\end{abstract}
\maketitle

\section{Introduction and background}

The spaces of sequences with values in a Banach lattice are intimately related
to the summability of operators between Banach lattices. For example, the
positive $(p,q)$-summing operators, introduced by Blasco\  \cite{OB}, are the
continuous operators which take positive weakly $p$-summable sequences
$\ell_{p,|\omega|}(X)$ into $q$-summable sequences $\ell_{q}(E)$ (see also
\cite{Zhu 98}). In \cite{achour p}, Achour-Belacel introduced the notion of
positive strongly $(p,q)$-summing operators to characterize those operators
whose adjoints are positive $(q^{\ast},p^{\ast})$-summing operators.

In \cite{Lab 04} and \cite{QG} the authors introduce the space of positive
strongly $p$-summable sequences $\ell_{p}^{\pi}(X)$ (initially introduced by
Cohen for Banach spaces \cite{Cohen}), as well as the space of positive
unconditionally $p$-summable sequences $\ell_{p,|\omega|}^{u}(X)$.

Tensor products have proved to be a useful tool for the theory of operator
ideals. Indeed, the excellent monograph \cite{tenso} deals with the tensor
product point of view of the theory and provides many applications to the
study of the structure of several spaces of summing linear operators.

The following characterizations provide nice examples of how tensor products
come into the theory of summing operators:\vspace{1mm}

\begin{itemize}
\item An operator $T:X\rightarrow Y$ is absolutely $p$-summing (see
\cite{Pie}) if and only if $I\otimes T:\ell_{p}\otimes_{\varepsilon
}X\rightarrow \ell_{p}\otimes_{\Delta_{p}}Y$ is continuous, where $\Delta_{p}$
satisfies $\Delta_{p}(\sum_{i=1}^{n}e_{i}\otimes x_{i})=(\sum_{i=1}^{n}\Vert
x_{i}\Vert^{p})^{1/p}$. and $\varepsilon$ is the injective tensor norm (see
\cite{tenso}).

\item Let $1<p<\infty$. An operator $T:X\rightarrow Y$ is Cohen $p$-nuclear
($p$-dominated) if and only if $I\otimes T:\ell_{p}\otimes_{\varepsilon
}X\rightarrow \ell_{p}\otimes_{\pi}Y$ is continuous, $\pi$ is the projective
norm (see \cite{Cohen} and \cite{tenso}).

\item An operator $T:X\rightarrow Y$ is strongly $p$-summing if and only if
$I\otimes T:\ell_{p}\otimes_{\Delta_{p}}X\rightarrow \ell_{p}\otimes_{\pi}Y$ is
continuous (see \cite{Alouani}).
\end{itemize}

The interplay between tensor products and positive summing operators have not
been explored yet. In this paper, first we utilize sequences in Banach lattice
spaces to define and characterize certain classes of positive summing
operators. Then we describe these classes in terms of the continuity of the
canonically defined tensor product operator $I\otimes T:\ell_{p}%
\otimes_{\alpha}X\rightarrow \ell_{p}\otimes_{\beta}Y$ for adequate $p$ and
tensor norms $\alpha$ and $\beta$.

Our results are presented as follows. After this introductory section, Section
2 is devoted to providing new properties of the positive $(p,q)$-summing
operators. Particularly, we prove that these operators are continuous
operators transform positive lattice unconditionally $p$-summable sequences
$\ell_{p,|\omega|}^{u}(X)$ into $q$-summable sequences $\ell_{q}(E)$. In
Section 3, utilizing the Banach lattice of positive strongly $p$-summable
sequences, we present a novel characterization of positive strongly
$(p,q)$-summing operators. Furthermore, we demonstrate that this class is
equivalent to the class of $(p,q)$-majorizing operators introduced in
\cite{chen2}. In Section 4, we study the notion of positive Cohen
$(p,q)$-nuclear operators. We explore the summability properties of these
operators by defining their corresponding operators between spaces of positive
weakly $p$-summable sequences $\ell_{p,|\omega|}(X)$ and strongly positive
strongly $p$-summable sequences $\ell_{p}^{\pi}(Y)$. In the final section, we
describe these classes in terms of the continuity of an associated tensor
operator that is defined between tensor products of sequences spaces.

We use standard notation for Banach lattices (see \cite{LT 96} and
\cite{Sch74}). If $X$ is an ordered set, the usual order on $X^{\mathbb{N}}$
is defined by $x=(x_{n})_{n}\geq0\Leftrightarrow x_{n}\geq0$ for each
$n\in \mathbb{N}$. If $X$ is a vector lattice and $\left \Vert x\right \Vert
\leq \left \Vert y\right \Vert $ whenever $\left \vert x\right \vert \leq \left \vert
y\right \vert $ ($\  \left \vert x\right \vert =\sup \left \{  x,-x\right \}  $) we
say that $X$ is a Banach lattice. If the lattice is complete, we say that $X$
is a Banach lattice. Note that this implies obviously that for any $x\in X$
the elements $x$ and $\left \vert x\right \vert $ have the same norm. We denote
by $X_{+}=\left \{  x\in X,x\geq0\right \}  .$ An element $x$ of $X$ is a
positive if $x\in X_{+}.$ For $x$ $\in X$ let $x^{+}:=\sup \left \{
x,0\right \}  ,x^{-}:=\sup \left \{  -x,0\right \}  $ be the positive part, the
negative part of $x$, respectively. For any $x\in X$, we have the following
properties $x=x^{+}-x^{-}$ and $\left \vert x\right \vert =x^{+}+x^{-}.$

The dual $X^{\ast}\ $of a Banach lattice $X$ is a complete Banach lattice
endowed with the natural order%

\[
x_{1}^{\ast}\leq x_{2}^{\ast}\Longleftrightarrow \left \langle x_{1}^{\ast
},x\right \rangle \leq \left \langle x_{2}^{\ast},x\right \rangle ,\  \forall x\in
X_{+}%
\]

\noindent where $\left \langle .,.\right \rangle $ denotes the bracket of duality.

By a sublattice of a Banach lattice $X$ we mean a linear subspace $A$ of
$X\ $so that $\sup \left \{  x,y\right \}  =x\wedge y$ belongs to $A$ whenever
$x,y\in A.$ The canonical embedding $i_{E}:X\rightarrow X^{\ast \ast}$ such
that $\left \langle i_{E}\left(  x\right)  ,x^{\ast}\right \rangle =\left \langle
x^{\ast},x\right \rangle $ of $X$ into its second dual $X^{\ast \ast}$ is an
order isometry from $X$ into a sublattice of $X^{\ast \ast}$, see
\cite[Proposition$\ $1.a.2]{LT 96}. If we consider $X$ as a sublattice of
$X^{\ast \ast}$ we have for $x_{1},x_{2}\in X$%

\[
x_{1}\leq x_{2}\Longleftrightarrow \left \langle x_{1},x^{\ast}\right \rangle
\leq \left \langle x_{2},x^{\ast}\right \rangle ,\  \forall x^{\ast}\in
X_{+}^{\ast}.
\]

\noindent Throughout this paper $X$ and $Y$ Banach lattices, $E$ and $F$
Banach spaces. An operator $T:X\rightarrow Y$ which preserves the lattice
operations is called lattice homomorphism, that is, $T(x_{1}\vee
x_{2})=T(x_{1})\vee T(x_{2})$ for all $x_{1},x_{2}\in X$. An one-to-one,
surjective lattice homomorphism is called lattice isomorphism. The space of
all bounded linear operators from $E$ to $F$ is denoted by $\mathcal{L}(E,F)$
and it is a Banach space with the usual supremum norm. The continuous dual
space $\mathcal{L}(E,\mathbb{K})$ of $E$ is denoted by $E^{\ast}$, whereas
$B_{E}$ denotes the closed unit ball of $E.$ The symbol $E\equiv F$ means that
$E$ and $F$ are isometrically isomorphic.

Let $1\leq p\leq \infty,$ we write $p^{\ast}$ the conjugate index of $p$, that
is $1/p+1/p^{\ast}=1$. As usual $\ell_{p}\left(  E\right)  $ denotes the
vector space of all absolutely $p$-summable sequences, with the usual norm
$\left \Vert \cdot \right \Vert _{p}$ and $\ell_{p,\omega}\left(  E\right)  $ the
space of all weakly $p$-summable sequences with the norm $\left \Vert \left(
x_{n}\right)  _{n}\right \Vert _{p,\omega}=\sup_{x^{\ast}\in B_{E^{\ast}}%
}\left \Vert (\left \langle x_{n},x^{\ast}\right \rangle )_{n}\right \Vert _{p}%
$.\ The closure in $\ell_{p,\omega}\left(  E\right)  $ of the set of all
sequences in $E$ which have only a finite number of non-zero terms, is a
Banach space with respect to the norm $\left \Vert \cdot \right \Vert _{p,\omega
}$. We denote this space by $\ell_{p}^{u}(E)$. The space

\begin{center}
$\left(  c_{0}\right)  _{\omega}(E):=\left \{  \left(  x_{n}\right)
_{n}\subset E:(\left \langle x_{n},\xi \right \rangle )_{n}\in c_{0},\forall
\xi \in E^{\ast}\right \}  $
\end{center}

\noindent is a closed subspace of $\ell_{\infty,\omega}\left(  E\right)  $
(see \cite[\S 19.4]{J}). It is well known that $\ell_{p,w}\left(  E\right)  $
is an isometrically isomorphic to $\mathcal{L}(\ell_{p^{\ast}},E)$ for
$1<p\leq \infty$ and $\ell_{1,\omega}\left(  E\right)  $ is an isometrically
isomorphic to $\mathcal{L}(c_{0},E).$ We denote by $\ell_{p}\left \langle
E\right \rangle \ $the space of all strongly $p$-summable sequences (Cohen
strongly $p$-summable sequences, see \cite{Cohen}), that is the space of all
sequences $\left(  x_{n}\right)  _{n=1}^{\infty}$ in $E$ such that
$(x_{n}^{\ast}(x_{n}))_{n}\in \ell_{1},$ for any $(x_{n}^{\ast})_{n=1}^{\infty
}\in \ell_{p^{\ast},\omega}(E^{\ast}),$ which is a Banach space with the norm
$\left \Vert \left(  x_{n}\right)  _{n=1}^{\infty}\right \Vert _{\left \langle
p\right \rangle }:=\underset{\left \Vert (x_{n}^{\ast})_{n=1}^{\infty
}\right \Vert _{p^{\ast},\omega}\leq1}{\sup}\left \vert \underset{n=1}%
{\overset{\infty}{\sum}}x_{n}^{\ast}(x_{n})\right \vert =\underset{\left \Vert
(x_{n}^{\ast})_{i=1}^{\infty}\right \Vert _{p^{\ast},\omega}\leq1}{\sup
}\underset{n=1}{\overset{\infty}{\sum}}\left \vert x_{n}^{\ast}(x_{n}%
)\right \vert .$ The following fact, discussed in \cite{Apiola}, is well-known%

\begin{equation}
\ell_{p^{\ast},\omega}(E^{\ast})\equiv \left[  \ell_{p}\left \langle
E\right \rangle \right]  ^{\ast}\text{ and }\left[  \ell_{p^{\ast}}\left \langle
E^{\ast}\right \rangle \right]  \equiv \left[  \ell_{p,\omega}(E)\right]
^{\ast}.\label{str-dual}%
\end{equation}
Moreover, it is well-known that%
\begin{equation}
\left[  \ell_{p}(E)\right]  ^{\ast}\equiv \ell_{p^{\ast}}(E^{\ast})\text{ for
}1\leq p<\infty \text{ and }\left[  c_{0}(E)\right]  ^{\ast}\equiv \ell
_{1}\left(  E^{\ast}\right)  .\label{b-dual}%
\end{equation}
\textit{Sequences in Banach lattice spaces. }Consider the case where $E$ is
replaced by a Banach lattice $X.$ The space of positive weakly $p$-summable
sequences was introduced in \cite{Lab 04} by%
\[
\ell_{p,|\omega|}(X)=\left \{  (x_{n})_{n}\in X^{\mathbb{N}}:\sum_{n=1}%
^{\infty}\langle x^{\ast},|x_{n}|\rangle^{p}<+\infty,~\forall x^{\ast}\in
X_{+}^{\ast}\right \}
\]
and
\[
\Vert(x_{n})_{n}\Vert_{p,|\omega|}=\sup_{x^{\ast}\in B_{X_{+}^{\ast}}}%
(\sum_{n=1}^{\infty}\langle x^{\ast},\left \vert x_{n}\right \vert \rangle
^{p})^{\frac{1}{p}}%
\]
Also, $\left(  c_{0}\right)  _{\left \vert \omega \right \vert }\left(  X\right)
$ is a closed vector lattice subspace of $\ell_{\infty,\left \vert
\omega \right \vert }\left(  X\right)  $. Then $(\ell_{p,|\omega|}(X),\Vert
\cdot \Vert_{p,|\omega|})$ is a Banach lattice (see \cite{QG} and \cite{Lab
04}). Moreover, we have\newline \noindent($I_{0}$) $\left \Vert \left(
x_{n}\right)  _{n}\right \Vert _{p,\omega}\leq \left \Vert \left(  x_{n}\right)
_{n}\right \Vert _{p,_{\left \vert \omega \right \vert }}$ for all $\left(
x_{n}\right)  \in \ell_{p,\left \vert \omega \right \vert }\left(  X\right)  $.

\noindent($I_{1}$) If $\left(  x_{n}\right)  _{n}\geq0,$ we have%

\begin{equation}
\left \Vert (x_{n})_{n}\right \Vert _{p,_{\left \vert \omega \right \vert }%
}=\left \Vert (x_{n})_{n}\right \Vert _{p,\omega}.\label{1.1}%
\end{equation}
We define%
\[
\ell_{p}^{\varepsilon}(X^{\ast})=\ell_{p,|\omega|}(X^{\ast})=\left \{
(x_{n}^{\ast})_{n}\in(X^{\ast})^{\mathbb{N}}:\sum_{n=1}^{\infty}\langle
x,|x_{n}^{\ast}|\rangle^{p}<+\infty,~~\forall x\in X_{+}\right \}
\]
and
\begin{equation}
\Vert(x_{n}^{\ast})_{n}\Vert_{p,|\omega|}=\sup_{x\in B_{X_{+}}}\left(
\sum_{n=1}^{\infty}\langle|x_{n}^{\ast}|,x\rangle^{p}\right)  ^{\frac{1}{p}%
}\label{1.2}%
\end{equation}
Then $\ell_{p,|\omega|}(X^{\ast})$ with this norm is a Banach lattice (see
\cite{QG} and \cite{Lab 04}).\newline Let $\ell_{p,|\omega|}^{u}(X)$ denote
the closed sublattice of $\ell_{p,|\omega|}(X)$ defined by
\[
\ell_{p,|\omega|}^{u}(X)=\left \{  (x_{n})_{n}\in X^{\mathbb{N}}:\lim_{n}%
\Vert(x_{k})_{k=n+1}^{\infty}\Vert_{p,|\omega|}=0\right \}  .
\]
In this case we say that $(x_{n})_{n}$ is positive unconditionally
$p$-summable sequences.\newline Let%
\[
\ell_{p}^{\pi}(X)=\left \{  (x_{n})_{n}\in X^{\mathbb{N}}:\sum_{n=1}^{\infty
}|\langle x_{n}^{\ast},|x_{n}|\rangle|<+\infty,~\forall(x_{n}^{\ast})_{n}%
\in(\ell_{p^{\ast},|\omega|}(X^{\ast}))^{+}\right \}
\]
and
\begin{equation}
\Vert(x_{n})_{n}\Vert_{\ell_{p}^{\pi}(X)}=\sup_{(x_{n}^{\ast})_{n}\in
B_{[\ell_{p^{\ast},|\omega|}(X^{\ast})]^{+}}}\sum_{n=1}^{\infty}\langle
x_{n}^{\ast},|x_{n}|\rangle.\label{ournorm}%
\end{equation}
In this case we say that $(x_{n})_{n}$ is positive strongly $p$-summable
sequences. Then $\ell_{p}^{\pi}(X)$ with this norm is a Banach lattice
(\cite{QG} ). For convenience let us denote $\ell_{1}\langle X\rangle=\ell
_{1}^{\pi}(X)=\ell_{1}(X).$\newline By (I$_{0}$) and (I$_{1}$), we have

\noindent($I_{0}^{\prime}$) $\left \Vert (x_{n})_{n}\right \Vert _{\ell_{p}%
^{\pi}(X)}\leq \left \Vert (x_{n})_{n}\right \Vert _{\ell_{p}\left \langle
X\right \rangle }$ for all $\left(  x_{n}\right)  _{n}\in \ell_{p}\left \langle
X\right \rangle $.

\noindent($I_{1}^{\prime}$) if $(x_{n})_{n}\geq0$ then $(x_{n})_{n}\in \ell
_{p}^{\pi}(X)$ if and only if $(x_{n})_{n}\in \ell_{p}\left \langle
X\right \rangle $, and%
\begin{equation}
\left \Vert (x_{n})_{n}\right \Vert _{\ell_{p}^{\pi}(X)}=\left \Vert (x_{n}%
)_{n}\right \Vert _{\ell_{p}\left \langle X\right \rangle }.
\end{equation}
Moreover, we have the following results due to \cite{bu, Lab 04}.

\begin{proposition}
\cite[Proposition 3.1 and Proposition 3.2]{bu} Let $X$ be a Banach lattice and
$1<p<\infty$

\begin{enumerate}
\item[(a)]
\[
\ell_{p}^{\pi}(X^{\ast})=\left \{  (x_{n}^{\ast})_{n}\in(X^{\ast})^{\mathbb{N}%
}:\sum_{n=1}^{\infty}|\langle x_{n},|x_{n}^{\ast}|\rangle|<+\infty
,\forall(x_{n})_{n}\in(\ell_{p^{\ast},|\omega|}(X))^{+}\right \}
\]
and for each $(x_{n}^{\ast})_{n}\in \ell_{p}^{\pi}(X^{\ast}),$
\[
\Vert(x_{n})_{n}\Vert_{\ell_{p}^{\pi}(X^{\ast})}=\sup_{(x_{n})_{n}\in
B_{[\ell_{p^{\ast},|\omega|}(X)]^{+}}}\sum_{n=1}^{\infty}\langle x_{n}%
,|x_{n}^{\ast}|\rangle.
\]

\item[(b)]
\[
\ell_{p}^{\pi}(X^{\ast})=\left \{  (x_{n}^{\ast})_{n}\in(X^{\ast})^{\mathbb{N}%
}:\sum_{n=1}^{\infty}|\langle x_{n},|x_{n}^{\ast}|\rangle|<+\infty
,\forall(x_{n})_{n}\in(\ell_{p^{\ast},|\omega|}^{u}(X))^{+}\right \}
\]
and for each $(x_{n}^{\ast})_{n}\in \ell_{p}^{\pi}(X^{\ast}),$
\[
\Vert(x_{n}^{\ast})_{n}\Vert_{\ell_{p}^{\pi}(X^{\ast})}=\sup_{(x_{n})_{n}\in
B_{[\ell_{p^{\ast},\omega}^{u}(X)]^{+}}}\sum_{n=1}^{\infty}\langle
x_{n},|x_{n}^{\ast}|\rangle.
\]

\end{enumerate}
\end{proposition}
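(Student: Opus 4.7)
My plan is to establish part (a) first, then deduce part (b) from (a) by a direct truncation argument.

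For part (a), denote by $M_1$ the norm $\|(x_n^*)\|_{\ell_p^\pi(X^*)}$ as given by the definition (supremum taken over $B_{[\ell_{p^*,|\omega|}(X^{**})]^+}$) and by $M_2$ the supremum on the right-hand side of the statement (over $B_{[\ell_{p^*,|\omega|}(X)]^+}$). The inequality $M_2 \leq M_1$ is immediate: the canonical embedding $i_X\colon X \to X^{**}$ is an order isometry into a sublattice, so every positive $(x_n)_n \in B_{[\ell_{p^*,|\omega|}(X)]^+}$ yields $(i_X(x_n))_n \in B_{[\ell_{p^*,|\omega|}(X^{**})]^+}$ with $\langle i_X(x_n), |x_n^*|\rangle = \langle x_n, |x_n^*|\rangle$.

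The reverse inequality $M_1 \leq M_2$ is the substantive step. Fix a positive $(x_n^{**})_n \in B_{[\ell_{p^*,|\omega|}(X^{**})]^+}$; since all terms $\langle x_n^{**}, |x_n^*|\rangle$ are non-negative, one has $\sum_{n=1}^\infty \langle x_n^{**}, |x_n^*|\rangle = \sup_N \sum_{n=1}^N \langle x_n^{**}, |x_n^*|\rangle$, so it is enough to approximate each partial sum by positive sequences in $X$. For each $N$ and $\epsilon>0$ I would produce positive $y_1,\ldots,y_N \in X$ such that the sequence $(y_1,\ldots,y_N,0,0,\ldots)$ has $\ell_{p^*,|\omega|}(X)$-norm at most $1+\epsilon$ and $\sum_{n=1}^N \langle y_n, |x_n^*|\rangle \geq \sum_{n=1}^N \langle x_n^{**}, |x_n^*|\rangle - \epsilon$. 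The main obstacle is preserving the $\ell_{p^*,|\omega|}$-norm bound, because that norm is a supremum over all of $B_{X_+^*}$, not only over the given functionals $|x_1^*|,\ldots,|x_N^*|$. To overcome this, I would first select a finite family of positive functionals in $B_{X_+^*}$ realizing $\|(x_n^{**})\|_{p^*,|\omega|}$ to within $\epsilon$, then apply the principle of local reflexivity (in a lattice-adapted form, relying on the weak*-density of $i_X(B_{X_+})$ in $B_{X_+^{**}}$) to the finite-dimensional subspaces $\operatorname{span}(x_1^{**},\ldots,x_N^{**}) \subset X^{**}$ and the enlarged subspace of $X^*$ spanned by $|x_1^*|,\ldots,|x_N^*|$ together with the selected norming functionals. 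Letting $\epsilon \to 0$ and then $N \to \infty$ gives $M_1 \leq M_2$.

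Part (b) follows from (a) by finite-support truncation. Since $\ell_{p^*,|\omega|}^u(X)\subset \ell_{p^*,|\omega|}(X)$, the supremum in (b) is bounded above by $M_2$. Conversely, for any positive $(x_n)_n \in B_{[\ell_{p^*,|\omega|}(X)]^+}$, each truncation $(x_1,\ldots,x_N,0,0,\ldots)$ is positive, has $\ell_{p^*,|\omega|}$-norm at most $\|(x_n)_n\|_{p^*,|\omega|}\leq 1$, and lies in $[\ell_{p^*,|\omega|}^u(X)]^+$ because its tails are eventually zero, so the defining limit holds trivially. Taking the supremum over $N$ and invoking the monotone convergence of the partial sums $\sum_{n=1}^N \langle x_n, |x_n^*|\rangle$ supplies the reverse inequality, and (b) follows.
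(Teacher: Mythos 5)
First, a point of reference: the paper does not prove this proposition at all; it is imported verbatim from \cite[Propositions 3.1 and 3.2]{bu}, so there is no internal argument to compare yours against and your attempt must stand on its own. Your part (b) and the inequality $M_2\leq M_1$ in part (a) are correct: positivity of all summands, truncation, and monotone convergence do exactly what you say (you should also add the routine uniform-boundedness remark showing that membership in each of the sets is equivalent to finiteness of the corresponding supremum over the unit ball, since the statement asserts equality of sets and not only of norms).

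The gap is in the inequality $M_1\leq M_2$, precisely at the step you yourself single out as ``the main obstacle,'' and the fix you propose does not close it. To certify $\Vert(y_n)_{n=1}^{N}\Vert_{p^{\ast},|\omega|}\leq 1+\epsilon$ you must bound $\sum_{n=1}^{N}\langle x^{\ast},y_n\rangle^{p^{\ast}}$ for \emph{every} $x^{\ast}\in B_{X_{+}^{\ast}}$. Selecting finitely many positive functionals that nearly norm $(x_n^{\ast \ast})_n$ and requiring the local reflexivity operator to reproduce the duality pairings on a finite-dimensional subspace of $X^{\ast}$ controls the new sequence $(y_n)_n$ only at those finitely many functionals; the $\ell_{p^{\ast},|\omega|}$-norm of $(y_n)_n$ may be (nearly) attained at entirely different elements of $B_{X_{+}^{\ast}}$, and weak$^{\ast}$ approximation of each $x_n^{\ast \ast}$ by elements of $B_{X_{+}}$ gives only lower semicontinuity of the norm, which is the wrong direction. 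What would actually carry the argument is the operator-norm bound $\Vert R\Vert \leq 1+\epsilon$ of a local reflexivity operator that is in addition \emph{positive}; this is a genuine Banach-lattice version of the principle of local reflexivity, a nontrivial theorem that you allude to (``in a lattice-adapted form'') but never state, and the mechanism you actually describe is not it. An alternative, self-contained route is to work in the length-$N$ sequence lattice $Z=\ell_{p^{\ast},|\omega|}^{N}(X)$: one first identifies $Z^{\ast \ast}$ lattice-isometrically with $\ell_{p^{\ast},|\omega|}^{N}(X^{\ast \ast})$ so that the canonical embedding is the obvious inclusion, notes that $(z_n)_n\mapsto\sum_{n=1}^{N}\langle z_n,|x_n^{\ast}|\rangle$ is weak$^{\ast}$-continuous on $Z^{\ast \ast}$ because $(|x_n^{\ast}|)_{n=1}^{N}\in Z^{\ast}$, and then invokes the weak$^{\ast}$-density of $B_{Z_{+}}$ in $B_{Z_{+}^{\ast \ast}}$ (a bipolar, i.e.\ positive Goldstine, argument); but the bidual identification is itself the substantive step. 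As written, your proof of (a) is incomplete.
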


\begin{lemma}
\label{lemma houm}\cite{Lab 04} Let $X$ be a Riesz space (i.e., vector
lattice), $(Y,C)$ an ordered vector space such that $Y=C-C$ and $T:X_{+}%
\longrightarrow C$ a positive, homogeneous and additive bijection. Then $Y$ is
a lattice space and $T$ can be uniquely extended to a lattice isomorphism from
$X$ onto $Y$.
\end{lemma}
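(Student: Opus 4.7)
The natural extension is $\widetilde{T}:X\to Y$ defined by $\widetilde{T}(x)=T(x^{+})-T(x^{-})$, and the strategy is to verify in turn that it is well defined, linear, bijective, order-preserving with order-preserving inverse, and then transfer the lattice structure of $X$ to $Y$ through it. The extension being forced on positive parts, uniqueness will be automatic.

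First, I would establish that for any decomposition $x=a-b$ with $a,b\in X_{+}$, one has $\widetilde{T}(x)=T(a)-T(b)$. Indeed, if $a-b=a'-b'$ with $a,b,a',b'\in X_{+}$, then $a+b'=a'+b$ in $X_{+}$, and the additivity of $T$ on $X_{+}$ gives $T(a)+T(b')=T(a')+T(b)$, so $T(a)-T(b)=T(a')-T(b')$ in $Y$. With this cancellation in hand, additivity of $\widetilde{T}$ on $X$ follows from the decomposition $x+y=(x^{+}+y^{+})-(x^{-}+y^{-})$ and additivity of $T$ on $X_{+}$. Positive homogeneity is inherited from $T$; for $\lambda<0$ one uses $(\lambda x)^{+}=-\lambda x^{-}$ and $(\lambda x)^{-}=-\lambda x^{+}$ to verify $\widetilde{T}(\lambda x)=\lambda\widetilde{T}(x)$.

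Next I would check bijectivity. Surjectivity is immediate from $Y=C-C$ and the bijectivity of $T:X_{+}\to C$: given $y=c_{1}-c_{2}$, choose $x_{i}=T^{-1}(c_{i})\in X_{+}$, so $y=\widetilde{T}(x_{1}-x_{2})$. For injectivity, $\widetilde{T}(x)=0$ forces $T(x^{+})=T(x^{-})$, hence $x^{+}=x^{-}$; combined with $x^{+}\wedge x^{-}=0$ in the Riesz space $X$, this yields $x=0$. Since by construction $\widetilde{T}$ restricted to $X_{+}$ equals $T$, and $T$ is a bijection onto $C$, we also get that $\widetilde{T}$ carries $X_{+}$ bijectively onto $C$; combined with linearity, this makes $\widetilde{T}$ an order isomorphism between the ordered vector spaces $X$ and $(Y,C)$.

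Finally, I would install a lattice structure on $Y$ by setting $y_{1}\vee y_{2}:=\widetilde{T}\bigl(\widetilde{T}^{-1}(y_{1})\vee\widetilde{T}^{-1}(y_{2})\bigr)$ and similarly for $\wedge$, and verify this is indeed the supremum in $(Y,C)$: it dominates each $y_{i}$ because the differences correspond under $\widetilde{T}^{-1}$ to elements of $X_{+}$, and any upper bound $z\in Y$ of $y_{1},y_{2}$ pulls back to an upper bound in $X$ of $\widetilde{T}^{-1}(y_{i})$, hence to an element above $\widetilde{T}^{-1}(y_{1})\vee\widetilde{T}^{-1}(y_{2})$. Uniqueness of the lattice extension is a one-liner: any linear map extending $T$ must satisfy $S(x)=S(x^{+})-S(x^{-})=T(x^{+})-T(x^{-})=\widetilde{T}(x)$. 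I expect the only real obstacle to be keeping the roles of the a priori cone $C$ on $Y$ and the order transferred through $\widetilde{T}$ clearly distinguished until one shows they coincide; everything else is an application of the Grothendieck-style extension of an additive, positive-homogeneous map from a cone to its linear span, made rigorous by the cancellation step above.
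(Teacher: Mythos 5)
Your argument is correct and complete: the cancellation step legitimizes $\widetilde{T}(x)=T(x^{+})-T(x^{-})$ on arbitrary positive decompositions, and everything else (additivity, homogeneity via $(\lambda x)^{\pm}$ for $\lambda<0$, injectivity from $x^{+}\wedge x^{-}=0$, surjectivity from $Y=C-C$, transfer of the lattice operations, and the forced uniqueness) follows as you describe. Note, however, that the paper offers no proof of this lemma to compare against --- it is quoted from the reference of Labuschagne --- so what you have written is an independent reconstruction of the standard Kantorovich-type extension argument, and it is sound; the only cosmetic gap is that you implicitly use $T(0)=0$ (which follows from additivity) when identifying $\widetilde{T}|_{X_{+}}$ with $T$.
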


\begin{theorem}
\label{lattice dual}Let $X$ be a Banach lattice and $1<p<\infty.$

\begin{enumerate}
\item[(i)] The Banach lattice $\ell_{p^{\ast},|\omega|}(X^{\ast})$ is lattice
and isometrically isomorphic to $\left[  \ell_{p}^{\pi}(X)\right]  ^{\ast}$

\item[(ii)] \cite[Corollary 3.3 and Corollary 3.4]{bu} The Banach lattice
$\left[  \ell_{p^{\ast}}^{\pi}(X^{\ast})\right]  $ is lattice and
isometrically isomorphic to $\left[  \ell_{p,|\omega|}^{u}(X)\right]  ^{\ast}$
\end{enumerate}
\end{theorem}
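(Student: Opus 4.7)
Part (ii) is quoted from \cite{bu}, so I focus on (i). The strategy is to construct the natural pairing $\Phi\colon[\ell_{p^{\ast},|\omega|}(X^{\ast})]_{+}\to[\ell_{p}^{\pi}(X)]^{\ast}_{+}$ given by $\langle\Phi((x_n^{\ast})),(x_n)\rangle:=\sum_{n}\langle x_n^{\ast},x_n\rangle$, show it is a bijective, additive, positively-homogeneous isometry on the positive cones, and then invoke Lemma~\ref{lemma houm} to promote this to the required lattice isometric isomorphism on the full Banach lattices.

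For well-definedness and contractivity of $\Phi$, the inequality $\sum_n|\langle x_n^{\ast},x_n\rangle|\leq\sum_n\langle x_n^{\ast},|x_n|\rangle\leq\|(x_n^{\ast})\|_{p^{\ast},|\omega|}\|(x_n)\|_{\ell_p^{\pi}(X)}$ follows from positivity of $(x_n^{\ast})$ and the defining formula (\ref{ournorm}). For the reverse inequality on the cone, given $(x_n^{\ast})$ of norm $a$ and $\varepsilon>0$, pick $x\in B_{X_+}$ with $\bigl(\sum_n\langle x_n^{\ast},x\rangle^{p^{\ast}}\bigr)^{1/p^{\ast}}>a-\varepsilon$ and set $\lambda_n:=\langle x_n^{\ast},x\rangle^{p^{\ast}-1}/\bigl(\sum_k\langle x_k^{\ast},x\rangle^{p^{\ast}}\bigr)^{1/p}$. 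A H\"older estimate applied within the definition (\ref{ournorm}) shows $(\lambda_n x)_n\in B_{\ell_p^{\pi}(X)_+}$, while $\sum_n\langle x_n^{\ast},\lambda_n x\rangle=\bigl(\sum_n\langle x_n^{\ast},x\rangle^{p^{\ast}}\bigr)^{1/p^{\ast}}>a-\varepsilon$, yielding $\|\Phi((x_n^{\ast}))\|=\|(x_n^{\ast})\|_{p^{\ast},|\omega|}$.

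For surjectivity, given $\phi\in[\ell_p^{\pi}(X)]^{\ast}_{+}$, define $x_n^{\ast}\in X^{\ast}$ by $\langle x_n^{\ast},x\rangle:=\phi(x\otimes\delta_n)$, where $x\otimes\delta_n$ is the sequence with $x$ in position $n$ and zeros elsewhere; since $\|x\otimes\delta_n\|_{\ell_p^{\pi}(X)}=\|x\|$, each $x_n^{\ast}$ is positive with $\|x_n^{\ast}\|\leq\|\phi\|$. Running the H\"older argument of the previous paragraph in reverse---bounding $\sum_{n\leq N}\lambda_n\langle x_n^{\ast},x\rangle=\phi\bigl(\sum_{n\leq N}\lambda_n x\otimes\delta_n\bigr)$ above by $\|\phi\|\bigl(\sum\lambda_n^p\bigr)^{1/p}\|x\|$---gives $(x_n^{\ast})\in\ell_{p^{\ast},|\omega|}(X^{\ast})_{+}$ with $\|(x_n^{\ast})\|_{p^{\ast},|\omega|}\leq\|\phi\|$. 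The representation $\phi=\Phi((x_n^{\ast}))$ on all of $\ell_p^{\pi}(X)_{+}$ is the main technical obstacle: the two functionals plainly agree on finite-support positive sequences, and extending to general positive $(x_n)\in\ell_p^{\pi}(X)_{+}$ amounts to showing that the partial sums $s_N$ converge to $(x_n)$ in $\ell_p^{\pi}(X)$-norm; by ($I_{1}^{\prime}$) this reduces to a uniform tail estimate for $\sum_{n>N}\langle y_n^{\ast},x_n\rangle$ over $(y_n^{\ast})\in B_{\ell_{p^{\ast},|\omega|}(X^{\ast})_{+}}$, which I expect to be the hardest step and which is handled by a separation/Hahn--Banach argument on the positive cone. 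Once this density is in hand, Lemma~\ref{lemma houm} applied to the bijection $\Phi$ on positive cones, together with the Banach-lattice identity $\|y\|=\||y|\|$, yields the desired lattice isometric isomorphism.
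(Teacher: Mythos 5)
Your proposal follows essentially the same route as the paper: the natural pairing restricted to the positive cones, surjectivity via the coordinate functionals $S\circ I_n$ (your $x\otimes\delta_n$), the H\"older-type computation giving $\Vert(x_n^{\ast})_n\Vert_{p^{\ast},|\omega|}\leq\Vert S\Vert$, and Lemma~\ref{lemma houm} to upgrade the cone bijection to a lattice isometric isomorphism. The only real divergence is that you flag the norm-density of finitely supported sequences in $\ell_p^{\pi}(X)_{+}$ as an unresolved obstacle, whereas the paper silently interchanges $S$ with the infinite sum; that step is standard once one recalls that $\ell_p^{\pi}(X)$ is isometrically the Fremlin tensor product $\ell_p\widehat{\otimes}_F X$, in which the finitely supported sequences form the dense algebraic tensor product, so no separation argument is needed.
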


\begin{proof}
(i) Let $1<p<\infty$, we define the mapping%

\begin{align*}
T  &  :\ell_{p^{\ast},|\omega|}(X^{\ast})\longrightarrow \left[  \ell_{p}^{\pi
}(X)\right]  ^{\ast}\\
x^{\ast}  &  =(x_{n}^{\ast})_{n}\longmapsto T(x^{\ast})=T_{x^{\ast}}%
\end{align*}
where $T_{x^{\ast}}$ is the linear functional defined by
\begin{align*}
T_{x^{\ast}}  &  :\ell_{p}^{\pi}(X)\longrightarrow \mathbb{K}\\
(x_{n})_{n}  &  \longmapsto T_{x^{\ast}}((x_{n})_{n})=\sum_{n=1}^{\infty}%
x_{n}^{\ast}(x_{n}).
\end{align*}
The map $T$ is clearly a positive map from $\left(  \ell_{p^{\ast},|\omega
|}(X^{\ast})\right)  _{+}$ to $\left[  \ell_{p}^{\pi}(X)\right]  _{+}^{\ast}$,
and it is homogeneous, additive and injective.To see that it is surjective,
note that if $S\in \left[  \ell_{p}^{\pi}(X)\right]  _{+}^{\ast}$ and
\begin{align*}
I_{n}  &  :X\longrightarrow \ell_{p}\langle X\rangle \\
x  &  \longmapsto(0,...,x,0,...)
\end{align*}
$x_{n}^{\ast}=S\circ I_{n}\in X_{+}^{\ast}$ for all $n\in \mathbb{N}.$\newline
Then
\begin{align*}
T_{(S\circ I_{n})_{n}}((x_{n})_{n})  &  =\sum_{n=1}^{\infty}(S\circ
I_{n})(x_{n})\\
&  =\sum_{n=1}^{\infty}S(I_{n}(x_{n}))\\
&  =S(I_{1}(x_{1}))+\cdot \cdot \cdot+S(I_{n}(x_{n}))+\cdot \cdot \cdot \\
&  =S((x_{1},0,...))+\cdot \cdot \cdot+S((0,...,x_{n},...))+\cdot \cdot \cdot \\
&  =S((x_{n})_{n})
\end{align*}
For $x\in B_{X}^{+}$ , we get
\begin{align*}
\left(  \sum_{n=1}^{\infty}\langle x_{n}^{\ast},x\rangle|^{p^{\ast}}\right)
^{\frac{1}{p^{\ast}}}  &  =\left(  \sum_{n=1}^{\infty}|S\circ I_{n}%
)(x)|^{p^{\ast}}\right)  ^{\frac{1}{p^{\ast}}}\\
&  =\sup_{(\alpha_{n})\in B_{\ell_{p}}^{+}}|\sum_{n=1}^{\infty}\alpha
_{n}S\circ I_{n}(x)|\\
&  =\sup_{(\alpha_{n})\in B_{\ell_{p}}^{+}}|S((\alpha_{n}x)_{n})|\\
&  \leq \Vert S\Vert \sup_{(\alpha_{n})\in B_{\ell_{p}}^{+}}\Vert(\alpha
_{n}x)_{n}\Vert_{\ell_{p}\langle X\rangle}%
\end{align*}
We need to estimate the latter expression. Note that
\begin{align*}
\Vert(\alpha_{n}x)_{n}\Vert_{\ell_{p}\langle X\rangle}  &  =\sup_{(x_{n}%
^{\ast})_{n}\in B_{[\ell_{p^{\ast},|\omega|}(X^{\ast})]^{+}}}\sum_{n}\langle
x_{n}^{\ast},|\alpha_{n}x_{n}|\rangle \\
&  =\sup_{(x_{n}^{\ast})_{n}\in B_{[\ell_{p^{\ast},|\omega|}(X^{\ast})]^{+}}%
}\sum_{n}|\alpha_{n}|\langle x_{n}^{\ast},|x_{n}|\rangle \\
&  \leq \Vert(\alpha_{n})_{n}\Vert_{p}.\sup_{(x_{n}^{\ast})_{n}\in
B_{[\ell_{p^{\ast},|\omega|}(X^{\ast})]^{+}}}\Vert(x_{n})_{n}\Vert_{p^{\ast}%
}^{\omega}\\
&  \leq \Vert(\alpha_{n})_{n}\Vert_{p}%
\end{align*}
Then
\begin{align*}
\left(  \sum_{n=1}^{\infty}|S\circ I_{n})(x)|^{p^{\ast}}\right)  ^{\frac
{1}{p^{\ast}}}  &  \leq \Vert S\Vert \sup_{(\alpha_{n})\in B_{\ell_{p}}^{+}%
}\Vert(\alpha_{n})_{n}\Vert_{p}\\
&  =\Vert S\Vert
\end{align*}
Hence, by (\ref{1.2}), $(x_{n}^{\ast})_{n}\in(\ell_{p^{\ast},|\omega|}%
(X^{\ast}))_{+}.$

Since $\ell_{p^{\ast},|\omega|}(X^{\ast})$ is a Riesz space and $\left[
\ell_{p}^{\pi}(X)\right]  ^{\ast}=\left[  \ell_{p}^{\pi}(X)\right]  _{+}%
^{\ast}-\left[  \ell_{p}^{\pi}(X)\right]  _{+}^{\ast},$ it follows from Lemma
\ref{lemma houm} that $\left[  \ell_{p}^{\pi}(X)\right]  ^{\ast}$ is a lattice
space and that $T$ is a lattice isomorphism from $\ell_{p^{\ast},|\omega
|}(X^{\ast})$ onto $\left[  \ell_{p}^{\pi}(X)\right]  ^{\ast}.$\newline For
$(x_{n}^{\ast})_{n}\in \ell_{p^{\ast},|\omega|}(X^{\ast})$,
\begin{align*}
\Vert T((x_{n}^{\ast})_{n})\Vert_{\left[  \ell_{p}^{\pi}(X)\right]  ^{\ast}}
&  =\Vert T((x_{n}^{\ast})_{n})\Vert_{\mathcal{L}(\ell_{p}^{\pi}%
(X),\mathbb{K})}\\
&  =\Vert|T((x_{n}^{\ast})_{n})|\Vert_{\left[  \ell_{p}^{\pi}(X)\right]
^{\ast}}\\
&  =\Vert T(|(x_{n})_{n}|)\Vert_{\left[  \ell_{p}^{\pi}(X)\right]  ^{\ast}}\\
&  =\Vert(|x_{n}^{\ast}|)_{n}\Vert_{\ell_{p^{\ast},|\omega|}(X^{\ast})}\\
&  =\Vert(x_{n}^{\ast})_{n}\Vert_{\ell_{p^{\ast},|\omega|}(X^{\ast})}.
\end{align*}
This means that $T$ is an isometry from $\ell_{p^{\ast},|\omega|}(X^{\ast})$
onto $\left[  \ell_{p}^{\pi}(X)\right]  ^{\ast}.$
\end{proof}

\section{Positive $(p,q)$-summing operators generated by $\ell_{p,|\omega
|}^{u}(X)$}

Let $1\leq q\leq p<\infty.$ An operator $T:X\longrightarrow F$ is said to be
positive $(p,q)$-summing \cite{OB} if there exists a constant $C>0$ such that
for every $x_{1},x_{2},...,x_{n}\in X,$ we have
\begin{equation}
\left \Vert (T(x_{i}))_{i=1}^{n}\right \Vert _{\ell_{p}(F)}\leq C\left \Vert
(x_{i})_{i=1}^{n}\right \Vert _{\ell_{q,|\omega|}(X)}. \label{equ posi p-su}%
\end{equation}
For $q<p=\infty$,%
\[
\sup \Vert T(x_{i})\Vert \leq C\left \Vert (x_{i})_{i=1}^{n}\right \Vert
_{\ell_{q,|\omega|}(X)}.
\]
We shall denote by $\Lambda_{p,q}(X,F)$ the space of positive $(p,q)$-summing
operators. This space becomes a Banach space with the norm $\Vert \cdot
\Vert_{\Lambda_{p,q}}$ given by the infimum of the constants verifying
(\ref{equ posi p-su}). For $p=\infty$ and $1\leq q<\infty$ we consider
$\Lambda_{\infty,q}(X,Y)=\mathcal{L}(X,F)$ and $\Vert T\Vert_{\Lambda
_{\infty,q}}=\Vert T\Vert.$

Now, we give characterizations of these classes in terms of transformations of
lattice vector-valued sequences.

\begin{proposition}
\label{w-the}\cite[Proposition 2]{OB} Let $T:X\longrightarrow F$ be an
operator and $1\leq q\leq p\leq \infty$, the following are equivalent.

\begin{enumerate}
\item[(1)] $T\in \Lambda_{p,q}(X,F).$

\item[(2)] The associated operator $\widehat{T}:\ell_{q,|\omega|}%
(X)\longrightarrow \ell_{p}(F)$ given by $\widehat{T}\left(  (x_{i}%
)_{i=1}^{\infty}\right)  =(T(x_{i}))_{i=1}^{\infty},(x_{i})_{i=1}^{\infty}%
\in \ell_{q,|\omega|}(X)$ is well-defined and continuous.

In this case $\Vert T\Vert_{\Lambda_{p,q}}=\Vert \hat{T}\Vert.$
\end{enumerate}
\end{proposition}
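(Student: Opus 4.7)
The plan is to establish the equivalence $(1)\Leftrightarrow(2)$ by a standard truncation-and-limit argument, together with the simple observation that truncating a sequence cannot enlarge its $\ell_{q,|\omega|}$-norm. The norm identity $\Vert T\Vert_{\Lambda_{p,q}}=\Vert\widehat{T}\Vert$ will fall out automatically by combining the two directions.

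For the implication $(2)\Rightarrow(1)$, I would take any finite family $x_{1},\ldots,x_{n}\in X$ and embed it in $\ell_{q,|\omega|}(X)$ as $(x_{1},\ldots,x_{n},0,0,\ldots)$. Applying continuity of $\widehat{T}$ to this sequence produces precisely the defining inequality (\ref{equ posi p-su}) with constant $C=\Vert\widehat{T}\Vert$, yielding $T\in\Lambda_{p,q}(X,F)$ and $\Vert T\Vert_{\Lambda_{p,q}}\leq\Vert\widehat{T}\Vert$.

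For the converse $(1)\Rightarrow(2)$, take $(x_{i})_{i}\in\ell_{q,|\omega|}(X)$. The key observation is the truncation inequality
\[
\Vert(x_{1},\ldots,x_{n},0,\ldots)\Vert_{q,|\omega|}=\sup_{x^{\ast}\in B_{X_{+}^{\ast}}}\Bigl(\sum_{i=1}^{n}\langle x^{\ast},|x_{i}|\rangle^{q}\Bigr)^{1/q}\leq\Vert(x_{i})_{i}\Vert_{q,|\omega|},
\]
which holds because every summand $\langle x^{\ast},|x_{i}|\rangle^{q}$ is nonnegative, so retaining the remaining tail terms under the same supremum only enlarges the expression. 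Applying the positive $(p,q)$-summing inequality to each finite truncation then gives
\[
\Bigl(\sum_{i=1}^{n}\Vert T(x_{i})\Vert^{p}\Bigr)^{1/p}\leq\Vert T\Vert_{\Lambda_{p,q}}\,\Vert(x_{i})_{i}\Vert_{q,|\omega|}
\]
for every $n$. Letting $n\to\infty$ (monotone convergence of nonnegative terms) shows $(T(x_{i}))_{i}\in\ell_{p}(F)$ and $\Vert\widehat{T}((x_{i})_{i})\Vert_{\ell_{p}(F)}\leq\Vert T\Vert_{\Lambda_{p,q}}\,\Vert(x_{i})_{i}\Vert_{q,|\omega|}$, so $\widehat{T}$ is well-defined and continuous with $\Vert\widehat{T}\Vert\leq\Vert T\Vert_{\Lambda_{p,q}}$. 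The edge case $p=\infty$ is handled by the identical reasoning with $\sup_{i}$ in place of the $\ell_{p}$-sum.

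The main ``obstacle'' is essentially no obstacle at all: the entire argument rests on the elementary monotonicity of the truncation norm for $\ell_{q,|\omega|}(X)$, which is transparent from the supremum-over-positive-functionals formula because every term in the inner sum is nonnegative. Once this monotonicity is noted, the rest is a routine finite-to-infinite passage, and matching the two inequalities above produces the claimed norm equality $\Vert T\Vert_{\Lambda_{p,q}}=\Vert\widehat{T}\Vert$.
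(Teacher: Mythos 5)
Your argument is correct and complete. Note, however, that the paper does not actually prove Proposition \ref{w-the}: it is quoted from Blasco \cite[Proposition 2]{OB} without proof, so there is no in-paper argument to match against. What the paper does prove are the two analogous statements, Theorem \ref{u-the} (the $\ell_{q,|\omega|}^{u}(X)$ version) and Proposition \ref{c-the} (the $\ell_{p}^{\pi}(Y)$ version), and it is instructive to compare: for sufficiency the paper resorts either to a closed-graph argument (Theorem \ref{u-the}) or to a proof by contradiction via a gliding-hump sequence $\bigl((x_{i,n}/2^{n})_{i=1}^{m_{n}}\bigr)_{n}$ (Proposition \ref{c-the}), whereas your $(1)\Rightarrow(2)$ is a direct truncation-and-monotone-convergence passage resting on the inequality $\Vert(x_{i})_{i=1}^{n}\Vert_{q,|\omega|}\leq\Vert(x_{i})_{i}\Vert_{q,|\omega|}$, which is indeed immediate from the nonnegativity of the terms $\langle x^{\ast},|x_{i}|\rangle^{q}$. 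Your route is more elementary and gives the quantitative bound $\Vert\widehat{T}\Vert\leq\Vert T\Vert_{\Lambda_{p,q}}$ explicitly rather than only qualitatively; the same direct scheme would in fact also streamline the sufficiency direction of Proposition \ref{c-the}, where the contradiction argument is avoidable. Your $(2)\Rightarrow(1)$ direction (padding a finite family with zeros) and the resulting norm equality are exactly the standard mechanism and are correct, including the $p=\infty$ case.
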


\begin{theorem}
\label{u-the}From a continuous linear $T\in \mathcal{L}(X,F)$ and $1\leq q\leq
p\leq \infty$ , the following condition are equivalent.

(i) $T\in \Lambda_{p,q}(X,F).$

(ii) The sequence $(T(x_{i}))_{i}\in \ell_{p}(F)$ whenever $(x_{i})_{i}\in
\ell_{q,|\omega|}^{u}(X).$

(iii) The induced map%
\[
\widehat{T}:\ell_{q,|\omega|}^{u}(X)\rightarrow \ell_{p}\left(  F\right)
,\widehat{T}\left(  (x_{i})_{i=1}^{\infty}\right)  =(T(x_{i}))_{i=1}^{\infty}.
\]

is a well-defined continuous linear operator and $\Vert T\Vert_{\Lambda_{p,q}%
}=\Vert \widehat{T}\Vert$.
\end{theorem}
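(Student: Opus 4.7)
The plan is to route the argument through Proposition \ref{w-the}, which already establishes the analogous equivalence with the larger space $\ell_{q,|\omega|}(X)$ in place of $\ell_{q,|\omega|}^{u}(X)$.

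For (i)$\Rightarrow$(ii), I would simply note that $\ell_{q,|\omega|}^{u}(X)$ is a closed sublattice of $\ell_{q,|\omega|}(X)$ endowed with the inherited norm, so Proposition \ref{w-the} immediately delivers $(T(x_i))_i\in\ell_p(F)$ for every $(x_i)_i\in \ell_{q,|\omega|}^{u}(X)$. For (iii)$\Rightarrow$(i), I would observe that every finitely supported sequence $(x_1,\dots,x_n,0,\dots)$ lies in $\ell_{q,|\omega|}^{u}(X)$ and its norm there coincides with $\|(x_i)_{i=1}^{n}\|_{\ell_{q,|\omega|}(X)}$; hence boundedness of $\widehat{T}$ gives the defining inequality \eqref{equ posi p-su} with constant $\|\widehat{T}\|$, and in particular $\|T\|_{\Lambda_{p,q}}\le \|\widehat{T}\|$.

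The core step is (ii)$\Rightarrow$(iii). The map $\widehat{T}$ is well-defined and linear by hypothesis; to get boundedness I would invoke the closed graph theorem, since both $\ell_{q,|\omega|}^{u}(X)$ and $\ell_{p}(F)$ are Banach spaces. The key technical point is that each coordinate evaluation $(x_n)_n\mapsto x_i$ is continuous on $\ell_{q,|\omega|}^{u}(X)$: isolating the $i$-th term gives $\langle x^{*},|x_i|\rangle \le \|(x_n)_n\|_{q,|\omega|}$ for every $x^{*}\in B_{X_{+}^{*}}$, and since $X$ is a Banach lattice one has $\|x_i\|=\||x_i|\|=\sup_{x^{*}\in B_{X_{+}^{*}}}\langle x^{*},|x_i|\rangle\le \|(x_n)_n\|_{q,|\omega|}$. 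The same inequality applies in $\ell_p(F)$. So if $(x_i^{(n)})_i\to (x_i)_i$ in $\ell_{q,|\omega|}^{u}(X)$ and $\widehat{T}((x_i^{(n)})_i)\to (y_i)_i$ in $\ell_{p}(F)$, coordinatewise convergence forces $T(x_i^{(n)})\to T(x_i)$ by continuity of $T$, while the limit in $\ell_p(F)$ forces $y_i=T(x_i)$.

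For the norm identity $\|T\|_{\Lambda_{p,q}}=\|\widehat{T}\|$, the inequality $\|T\|_{\Lambda_{p,q}}\le\|\widehat{T}\|$ follows from the finite-sequence argument in (iii)$\Rightarrow$(i), and the reverse inequality $\|\widehat{T}\|\le\|T\|_{\Lambda_{p,q}}$ follows by testing an arbitrary $(x_i)_i\in\ell_{q,|\omega|}^{u}(X)$ against its truncations: the inequality $\|(T(x_i))_{i=1}^{N}\|_{\ell_p(F)}\le\|T\|_{\Lambda_{p,q}}\|(x_i)_{i=1}^{N}\|_{q,|\omega|}\le\|T\|_{\Lambda_{p,q}}\|(x_i)_i\|_{q,|\omega|}$ passes to the limit $N\to\infty$.

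The only real obstacle I anticipate is the closed graph step, specifically the continuous coordinate evaluation, which requires the lattice identity $\|x\|=\||x|\|=\sup_{x^{*}\in B_{X_{+}^{*}}}\langle x^{*},|x|\rangle$; everything else is a bookkeeping reduction to the already known Proposition \ref{w-the}.
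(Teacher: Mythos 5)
Your proof is correct, and its central step --- establishing (ii)$\Rightarrow$(iii) via the closed graph theorem, with the continuity of the coordinate evaluations on $\ell_{q,|\omega|}^{u}(X)$ and on $\ell_{p}(F)$ as the key technical point --- is exactly the paper's argument; your lattice identity $\|x_i\|=\||x_i|\|=\sup_{x^{*}\in B_{X_{+}^{*}}}\langle x^{*},|x_i|\rangle$ is in fact a slightly cleaner route to coordinatewise convergence than the paper's Hahn--Banach estimate with the extra factor $2^{q}$. The one place you genuinely diverge is (i)$\Rightarrow$(ii): the paper gives a self-contained argument, showing that the partial sums $(T(x_i))_{i=1}^{n}$ form a Cauchy sequence in $\ell_{p}(F)$ precisely because the tails $\|(x_i)_{i=m+1}^{\infty}\|_{q,|\omega|}$ vanish for members of $\ell_{q,|\omega|}^{u}(X)$, whereas you simply restrict the operator $\widehat{T}:\ell_{q,|\omega|}(X)\rightarrow\ell_{p}(F)$ already furnished by Proposition \ref{w-the} to the closed sublattice $\ell_{q,|\omega|}^{u}(X)$. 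Your reduction is shorter and perfectly legitimate (Proposition \ref{w-the} is quoted from Blasco before the theorem), while the paper's Cauchy argument has the merit of not presupposing the result on the larger space; your handling of (iii)$\Rightarrow$(i) and of the norm identity via finitely supported sequences and truncations fills in details the paper dismisses as straightforward.
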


\begin{proof}
$(i)\Rightarrow(ii).$ Let $x=(x_{i})_{i=1}^{\infty}\in \ell_{q,|\omega|}%
^{u}(X)$, we have
\[
\left \Vert (T(x_{i}))_{i=1}^{n}\right \Vert _{\ell_{p}(F)}\leq C\left \Vert
(x_{i})_{i=1}^{n}\right \Vert _{\ell_{q,|\omega|}(X)},
\]
for all $n\in \mathbb{N}$. So, if $m_{1}>m_{2},$
\begin{align*}
\left \Vert (T(x_{i}))_{i=1}^{m_{1}}-(T(x_{i}))_{i=1}^{m_{2}}\right \Vert
_{\ell_{p}(F)} &  =\left \Vert (T(x_{i}))_{i=m_{2}+1}^{m_{1}}\right \Vert
_{\ell_{p}(F)}\\
&  \leq C\left \Vert (x_{i})_{i=m_{2}+1}^{m_{1}}\right \Vert _{\ell_{q,|\omega
|}(X)}.
\end{align*}
We conclude that $(y_{n})_{n=1}^{\infty}$ with $y_{n}=(T(x_{i}))_{i=1}^{n}$ is
Cauchy in $\ell_{p}(F)$ and so converges to some $(z_{i})_{i=1}^{\infty}%
\in \ell_{p}(F).$\newline Given $\varepsilon>0,$ we can find $N_{o}%
\in \mathbb{N}$ so that
\[
n\geq N_{0}\Rightarrow \left \Vert (T(x_{i}))_{i=1}^{n}-(z)_{i=1}^{\infty
}\right \Vert _{\ell_{p}(F)}<\varepsilon.
\]
So, for a fixed $i_{0}\in \mathbb{N},$ we have
\[
\Vert T(x_{i_{0}})-z_{i_{0}}\Vert<\varepsilon.
\]
We conclude that $T(x_{i_{0}})=z_{i_{0}}.$ Hence $(T(x_{i}))_{i=1}^{\infty
}=(z_{i})_{i=1}^{\infty}\in \ell_{p}(F).$

$(ii)\Rightarrow(iii)$. Is it clear that $T$ is linear implies that
$\widehat{T}$ is linear, for show that $\widehat{T}$ is continuous we show
that $\widehat{T}$ is a closed graph. Suppose that the sequence $(T(x_{i}%
)_{i=1}^{\infty})\in \ell_{p}(F)$ whenever $(x_{i})_{i=1}^{\infty}\in
\ell_{q,|\omega|}^{u}(X)$ and let $\left(  (x_{i},\widehat{T}(x_{i}))\right)
_{i=1}^{\infty}$ a convergent sequence in the Cartesian product $\ell
_{q,|\omega|}^{u}(X)\times \ell_{p}(F)$ that is, $\left(  (x_{i},\widehat
{T}(x_{i})\right)  \longrightarrow(x,y)$ . So
\begin{equation}
x_{k}\longrightarrow x=(x_{n})_{n=1}^{\infty}\in \ell_{q,|\omega|}%
^{u}(X),\label{25}%
\end{equation}
and
\begin{equation}
\hat{T}(x_{i})\longrightarrow y=(y_{n})_{n=1}^{\infty}\in \ell_{p}%
(F).\label{26}%
\end{equation}
From (\ref{25}), for all $\varepsilon>0$ there exist $k>0$ such that
\begin{align*}
|x^{\ast}(x_{i}^{k}-x_{i})|^{q} &  \leq(|x^{\ast}||(x_{i}^{k}-_{i})|)^{q}%
\leq \sum_{i=1}^{\infty}(x^{\ast}|(x_{i}^{k}-x_{i})|)^{q}\leq \sup_{x^{\ast}\in
B_{X_{+}^{\ast}}}\left(  \sum_{i=1}^{\infty}(x^{\ast}|(x_{i}^{k}-x_{i}%
)|)^{q}\right)  \\
&  \leq \Vert x^{k}-x\Vert_{\ell_{q,|\omega|}^{u}}^{q}\\
&  \leq \varepsilon^{q}%
\end{align*}
whenever $k>k_{0}$, $x^{\ast}\in B_{X_{+}^{\ast}}$ and for all $i\in
\mathbb{N}$ in this way, by Hanh-Banach Theorem, we get
\begin{equation}
\Vert x_{i}^{k}-x_{i}\Vert^{q}=\sup_{x^{\ast}\in B_{X^{\ast}}}|x^{\ast}%
(x_{i}^{k}-x_{i})|^{q}\leq2^{q}\sup_{x^{\ast}\in B_{X_{+}^{\ast}}}|x^{\ast
}(x_{i}^{k}-x_{i})|^{q}\leq2^{q}\xi^{q}.\label{27}%
\end{equation}
whenever $k>k_{0}$ and for all $i\in \mathbb{N}$, then we have $x_{i}%
^{k}\longrightarrow x_{i}\in X$ for all $k\longrightarrow \infty.$ How $T$ is
continuous, we find
\[
\lim_{k}T(x_{i}^{k})=T(x_{i}),\  \text{for all }\  \ i\in \mathbb{N}%
\]
From (\ref{26}), for all $\varepsilon>0$ there exist $k^{^{\prime}}>0$ such
that
\begin{align*}
\Vert T(x_{i}^{k^{^{\prime}}})-y_{i}\Vert^{p} &  \leq \sum_{i=1}^{\infty}\Vert
T(x_{i}^{k^{^{\prime}}})-y_{i}\Vert^{p}\leq \Vert(T(x_{i}^{k^{^{\prime}}%
}))_{i=1}^{\infty}-(y_{i})_{i=1}^{\infty}\Vert_{p}^{p}\\
&  =\Vert \widehat{T}(x^{k^{^{\prime}}})-y\Vert_{p}^{p}\\
&  \leq \varepsilon^{p},
\end{align*}
whenever $k^{^{\prime}}>k_{0}^{^{\prime}}$ and for all $i\in \mathbb{N}$, we
find
\begin{equation}
\lim_{k}T(x_{i}^{k^{^{\prime}}})=y_{i},\  \  \text{for all }\  \ i\in
\mathbb{N}.\label{28}%
\end{equation}
From (\ref{27}), (\ref{28}) and uniqueness of the limit, it follows that
\[
\widehat{T}(x)=(T(x_{i})_{i=1}^{\infty})=(y_{i})_{i=1}^{\infty}=y.
\]
This implies that the linear mapping $\widehat{T}$ has a closed graph.

$(iii)\Rightarrow(i)$ Is straightforward.

\end{proof}

\section{Positive strongly $(p,q)$-summing operators generated by $\ell
_{p}^{\pi}(Y)$}

Let $1\leq q\leq p\leq \infty$ \ Recall that an operator $T\in \mathcal{L}(E,Y)$
is called positive strongly $(p,q)$-summing \cite{achour p} if there exists a
constant $C>0$ such that for all finite sets $n\in \mathbb{N}$, $(x_{i}%
)_{i=1}^{n}\subset E$ and $(y_{i}^{\ast})_{i=1}^{n}\subset(Y^{\ast})^{+}$, we
have
\begin{equation}
\sum_{i=1}^{n}|\langle T(x_{i}),y_{i}^{\ast}\rangle|\leq C\Vert(x_{i}%
)_{i=1}^{n}\Vert_{q}.\Vert(y_{i}^{\ast})_{i=1}^{n}\Vert_{p^{\ast},\omega}.
\label{st-def}%
\end{equation}
We shall denote by $\mathcal{D}_{p,q}^{+}(E,Y)$ the space of positive strongly
$(p,q)$-summing operators or $\mathcal{D}_{p}^{+}(E,Y)$ if $p=q$, the space of
positive strongly $p$-summing operators. This space becomes a Banach space
with the norm $\Vert \cdot \Vert_{\mathcal{D}_{p,q}^{+}}$ given by the infimum
of the constants verifying (\ref{st-def}).\newline

\begin{lemma}
$T\in \mathcal{D}_{p,q}^{+}(E,Y)$ if and only if there exists a constant $C>0$
such that for all finite sets $n\in \mathbb{N}$, $(x_{i})_{i=1}^{n}\subset E$
and $(y_{i}^{\ast})_{i=1}^{n}\subset(Y^{\ast})^{+},$ we have
\begin{equation}
\Vert(T(x_{i})_{i=1}^{n}\Vert_{\ell_{p}^{\pi}(Y)}\leq C\Vert(x_{i})_{i=1}%
^{n}\Vert_{q}. \label{30}%
\end{equation}

\end{lemma}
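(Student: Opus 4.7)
The plan is to rewrite the inequality (\ref{30}) in terms of the supremum formula (\ref{ournorm}) defining $\Vert\cdot\Vert_{\ell_p^\pi(Y)}$, so that (\ref{30}) becomes the statement: for every positive finite family $(y_i^\ast)_{i=1}^n\subset (Y^\ast)^+$ with $\Vert(y_i^\ast)_{i=1}^n\Vert_{p^\ast,|\omega|}\le 1$,
\[
\sum_{i=1}^n\langle y_i^\ast,|T(x_i)|\rangle\le C\,\Vert(x_i)_{i=1}^n\Vert_q,
\]
and then compare this reformulation with (\ref{st-def}). The two basic ingredients are property $(I_1)$, which identifies $\Vert\cdot\Vert_{p^\ast,\omega}$ and $\Vert\cdot\Vert_{p^\ast,|\omega|}$ on the positive cone, and the elementary lattice inequality $|\langle y^\ast,y\rangle|\le\langle y^\ast,|y|\rangle$ valid whenever $y^\ast\ge 0$.

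For the easy implication ``(\ref{30}) $\Longrightarrow T\in\mathcal{D}_{p,q}^+(E,Y)$'', I would pick any positive $(y_i^\ast)_{i=1}^n$, normalize by $M:=\Vert(y_i^\ast)\Vert_{p^\ast,\omega}$ (using $(I_1)$ to land inside $B_{[\ell_{p^\ast,|\omega|}(Y^\ast)]^+}$), apply the reformulation above to the normalized sequence, and finish by the pointwise bound $|\langle T(x_i),y_i^\ast\rangle|\le\langle y_i^\ast,|T(x_i)|\rangle$ to obtain (\ref{st-def}).

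The converse is the heart of the argument. Fix $(y_i^\ast)\in B_{[\ell_{p^\ast,|\omega|}(Y^\ast)]^+}$ and $\varepsilon>0$. I would invoke the Riesz--Kantorovich duality
\[
\langle y_i^\ast,|T(x_i)|\rangle=\sup_{|z^\ast|\le y_i^\ast}\langle z^\ast,T(x_i)\rangle
\]
to select for each $i$ a (generally signed) $z_i^\ast\in Y^\ast$ with $|z_i^\ast|\le y_i^\ast$ realizing the supremum up to $\varepsilon/n$. Then I would Jordan-decompose $z_i^\ast=(z_i^\ast)^+-(z_i^\ast)^-$, noting that $0\le (z_i^\ast)^\pm\le y_i^\ast$ and hence, by lattice monotonicity of $\Vert\cdot\Vert_{p^\ast,|\omega|}$ on the positive cone combined with $(I_1)$, $\Vert((z_i^\ast)^\pm)\Vert_{p^\ast,\omega}\le 1$. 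Applying the hypothesis (\ref{st-def}) to each of the two positive sequences and adding yields
\[
\sum_{i=1}^n\langle y_i^\ast,|T(x_i)|\rangle\le 2C\,\Vert(x_i)\Vert_q+\varepsilon.
\]
Letting $\varepsilon\downarrow 0$ and taking the supremum over $(y_i^\ast)$ in $B_{[\ell_{p^\ast,|\omega|}(Y^\ast)]^+}$ produces (\ref{30}) with constant $2C$.

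The main obstacle is precisely that the lattice-duality formula for $|T(x_i)|$ tests against signed functionals, while the hypothesis (\ref{st-def}) only quantifies over positive $(y_i^\ast)$; Jordan decomposition bridges this gap at the cost of a factor of $2$ in the constant. Since the lemma asserts equivalence of the two conditions rather than equality of optimal constants, this loss is harmless.
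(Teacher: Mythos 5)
Your proposal is correct and follows the same overall route as the paper: both directions hinge on rewriting $\Vert(T(x_i))_{i=1}^n\Vert_{\ell_p^\pi(Y)}$ via the supremum formula (\ref{ournorm}), using $(I_1)$ to identify the weak and lattice-weak norms on the positive cone, the pointwise bound $|\langle T(x_i),y_i^\ast\rangle|\le\langle y_i^\ast,|T(x_i)|\rangle$ for the easy implication, and the Riesz--Kantorovich-type identity $\langle y^\ast,|y|\rangle=\sup\{|\langle g^\ast,y\rangle|:|g^\ast|\le y^\ast\}$ with an $\varepsilon/n$ near-optimal selection for the hard one. The one place you genuinely diverge is after selecting the dominated functionals: the paper applies the defining inequality (\ref{st-def}) directly to the selected $(g_i^\ast)_{i=1}^n$, asserting that this family lies in the positive cone of $\ell_{p^\ast,|\omega|}^n(Y^\ast)$ --- which need not hold, since $|g_i^\ast|\le y_i^\ast$ does not force $g_i^\ast\ge 0$, and (\ref{st-def}) is only hypothesized for positive functionals. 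Your Jordan decomposition $z_i^\ast=(z_i^\ast)^+-(z_i^\ast)^-$ with $0\le(z_i^\ast)^\pm\le y_i^\ast$ repairs exactly this point, at the cost of the constant $2C$; since the lemma only asserts the existence of some constant, that loss is indeed harmless, and your version is the more careful of the two. (If one wanted to keep the constant $C$, one could instead note that $|\langle T(x_i),z_i^\ast\rangle|\le\langle y_i^\ast,|T(x_i)|\rangle$ is the quantity being bounded and argue via the bilinear form on the positive cone, but nothing in the paper's later use of this lemma requires the sharp constant.)
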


\begin{proof}
Let $T\in \mathcal{D}_{p,q}^{+}(E,Y)$ then there exists a constant $C>0$ such
that for all finite sets $n\in \mathbb{N}$, $(x_{i})_{i=1}^{n}\subset E$ and
$(y_{i}^{\ast})_{i=1}^{n}\subset(Y^{\ast})^{+},$ we have%

\[
\sum_{i=1}^{n}|\langle T(x_{i}),y_{i}^{\ast}|\rangle \leq C\Vert(x_{i}%
)_{i=1}^{n}\Vert_{q}.\Vert(y_{i}^{\ast})_{i=1}^{n}\Vert_{p^{\ast},\omega}.
\]
For each $z\in Y$ and $z^{\ast}\in Y^{\ast},$
\begin{equation}
|z^{\ast}|(|z|)=\sup \left \{  |g^{\ast}(z)|:|g^{\ast}|\leq|z^{\ast}|\right \}  .
\label{31}%
\end{equation}

Now let $(y_{i}^{\ast})_{i=1}^{n}\in(\ell_{p^{\ast},|\omega|}(Y^{\ast}))^{+}$
and $\varepsilon>0$, from (\ref{31}) there exists for each $1\leq i\leq n$, a
$g_{i}^{\ast}\in Y^{\ast}$ such that $|g_{i}^{\ast}|\leq|y_{i}^{\ast}|$ and
\[
|y_{i}^{\ast}|(|T(x_{i})|)\leq|g_{i}^{\ast}(T(x_{i}))|+\frac{\varepsilon}{n}.
\]
Note that $(g_{i}^{\ast})_{i=1}^{n}\in(\ell_{p^{\ast},|\omega|}^{n}(Y^{\ast
}))^{+}.$ Then
\begin{align*}
\sum_{i=1}^{n}\langle|T(x_{i})|,y_{i}^{\ast}\rangle &  =\sum_{i=1}^{n}%
\langle|T(x_{i})|,|y_{i}^{\ast}|\rangle \leq \sum_{i=1}^{n}|\langle
T(x_{i}),g_{i}^{\ast}\rangle|+\varepsilon \\
&  \leq C\left[  \Vert(x_{i})_{i=1}^{n}\Vert_{q}.\Vert(g_{i}^{\ast})_{i=1}%
^{n}\Vert_{p^{\ast},\omega}\right]  +\varepsilon \\
&  \leq C\left[  \Vert(x_{i})_{i=1}^{n}\Vert_{q}.\Vert(y_{i}^{\ast})_{i=1}%
^{n}\Vert_{p^{\ast},\omega}\right]  +\varepsilon.
\end{align*}
Then
\[
\Vert(T(x_{i})_{i=1}^{n}\Vert_{\ell_{p}^{\pi}(Y)}\leq C\Vert(x_{i})_{i=1}%
^{n}\Vert_{q}+\varepsilon.
\]
Conversely, directly by $|\langle T(x_{i}),y_{i}^{\ast}\rangle|\leq
\langle|T(x_{i})|,y_{i}^{\ast}\rangle$ for every $i.$
\end{proof}

As in classical cases, the natural approach to presenting the summability
properties of positive strongly $(p,q)$-summing operators by defining the
corresponding operator between appropriate lattice sequence spaces..

\begin{proposition}
\label{c-the}Let $T:E\longrightarrow Y$ be an operator and $1\leq q\leq
p\leq \infty$, the following are equivalent:

\begin{enumerate}
\item[(1)] $T\in \mathcal{D}_{p,q}^{+}(E,Y).$

\item[(2)] The associated operator $\widehat{T}:\ell_{q}(E)\longrightarrow
\ell_{p}^{\pi}(Y)$ given by $\widehat{T}\left(  (x_{i})_{i=1}^{\infty}\right)
=(T(x_{i}))_{i=1}^{\infty},(x_{i})_{i=1}^{\infty}\in \ell_{q}(E)$ is
well-defined and continuous.

In this case $\Vert T\Vert_{\mathcal{D}_{p,q}^{+}}=\Vert \widehat{T}\Vert.$
\end{enumerate}
\end{proposition}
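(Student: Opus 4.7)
The plan is to use the preceding Lemma to reduce the equivalence to the finite-sequence inequality
\[
\|(T(x_i))_{i=1}^n\|_{\ell_p^\pi(Y)}\leq C\|(x_i)_{i=1}^n\|_q,
\]
and then to pass between finite and infinite sequences by truncation together with the completeness of $\ell_p^\pi(Y)$, mirroring the scheme of Theorem \ref{u-the}.

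For $(1)\Rightarrow(2)$, I would fix $x=(x_i)_i\in\ell_q(E)$ and consider the truncations $x^{(n)}=(x_1,\dots,x_n,0,\dots)$. Applying the Lemma to $x^{(m)}-x^{(n)}$ shows that $\{\widehat{T}(x^{(n)})\}_n$ is Cauchy in the Banach lattice $\ell_p^\pi(Y)$, hence converges to some $z\in\ell_p^\pi(Y)$. The key step is to identify $z$ coordinate-wise with $(T(x_i))_i$; for this I would verify that each coordinate projection $\pi_i\colon\ell_p^\pi(Y)\to Y$ is continuous, by testing the defining supremum \eqref{ournorm} against the sequence $(0,\dots,y^*,0,\dots)$ with $y^*\in B_{Y^*_+}$ placed in the $i$-th slot, which gives $\|y_i\|=\||y_i|\|\leq\|(y_n)\|_{\ell_p^\pi(Y)}$. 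Passing to the limit in the Lemma's inequality then yields $\|\widehat{T}(x)\|_{\ell_p^\pi(Y)}\leq C\|x\|_q$, so $\widehat{T}$ is well-defined and continuous with $\|\widehat{T}\|\leq\|T\|_{\mathcal{D}_{p,q}^+}$.

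For $(2)\Rightarrow(1)$, given finite families $(x_i)_{i=1}^n\subset E$ and $(y_i^*)_{i=1}^n\subset(Y^*)^+$, I would extend them by zero to infinite sequences in $\ell_q(E)$ and $\ell_{p^*,|\omega|}(Y^*)$ respectively. Using $|\langle T(x_i),y_i^*\rangle|\leq\langle|T(x_i)|,y_i^*\rangle$ (since $y_i^*\geq 0$) together with the defining supremum \eqref{ournorm}, one obtains
\[
\sum_{i=1}^n|\langle T(x_i),y_i^*\rangle|\leq\|(T(x_i))\|_{\ell_p^\pi(Y)}\cdot\|(y_i^*)\|_{p^*,|\omega|}\leq\|\widehat{T}\|\cdot\|(x_i)\|_q\cdot\|(y_i^*)\|_{p^*,\omega},
\]
where the identity $\|(y_i^*)\|_{p^*,|\omega|}=\|(y_i^*)\|_{p^*,\omega}$ is property $(I_1)$ applied to the positive sequence. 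This gives $T\in\mathcal{D}_{p,q}^+(E,Y)$ with $\|T\|_{\mathcal{D}_{p,q}^+}\leq\|\widehat{T}\|$; combining the two estimates yields the announced norm equality.

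The main obstacle, modest but genuine, is the coordinate-wise identification of the $\ell_p^\pi(Y)$-limit in the first implication: one must check that coordinate evaluation is continuous on $\ell_p^\pi(Y)$, which is not immediate from the definition but follows from the test-sequence argument sketched above. Once this is in hand, both directions reduce to the Lemma combined with property $(I_1)$.
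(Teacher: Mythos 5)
Your proof is correct, but both implications are handled by a genuinely different route than the paper's. For $(1)\Rightarrow(2)$ the paper avoids the truncation--Cauchy--coordinate-identification scheme entirely: since the norm \eqref{ournorm} is a supremum of series with nonnegative terms, it writes $\sum_{i=1}^\infty\langle|T(x_i)|,y_i^*\rangle=\sup_n\sum_{i=1}^n\langle|T(x_i)|,y_i^*\rangle$, bounds each partial sum by the finite-sequence inequality, and obtains membership in $\ell_p^\pi(Y)$ and the norm estimate in one stroke. Your route also works (and mirrors the scheme of Theorem \ref{u-the}), but it genuinely requires the continuity of the coordinate evaluations on $\ell_p^\pi(Y)$, which you correctly supply by testing \eqref{ournorm} against $(0,\dots,y^*,0,\dots)$ with $y^*\in B_{Y^*_+}$; the paper's monotone-supremum computation sidesteps that verification. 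For $(2)\Rightarrow(1)$ the paper argues by contradiction with a gliding-hump construction: assuming the finite inequality \eqref{30} fails, it concatenates scaled blocks $x_{i,n}/2^n$ into a single element of $\ell_q(E)$ whose image has infinite $\ell_p^\pi$-norm, contradicting well-definedness of $\widehat{T}$. Your direct argument --- extend the finite families by zero, use $|\langle T(x_i),y_i^*\rangle|\leq\langle|T(x_i)|,y_i^*\rangle$ and the supremum in \eqref{ournorm}, and invoke $\|(y_i^*)\|_{p^*,|\omega|}=\|(y_i^*)\|_{p^*,\omega}$ for positive sequences --- is more elementary and fully sufficient here, since statement (2) already assumes continuity of $\widehat{T}$. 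What the paper's heavier argument buys is the (unstated) stronger fact that mere well-definedness of $\widehat{T}$ already forces $T\in\mathcal{D}_{p,q}^{+}(E,Y)$, so that continuity is automatic; your version does not yield that, but it is not needed for the equivalence as stated.
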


\begin{proof}
For the necessity, let $T\in \mathcal{D}_{p,q}^{+}(E,Y),$ $\left(
x_{i}\right)  _{i=1}^{\infty}\in \ell_{q}(E)$ and $\left(  y_{i}^{\ast}\right)
_{i=1}^{\infty}\in(\ell_{p^{\ast},|\omega|}(Y^{\ast}))^{+}.$ So we have%
\[%
\begin{array}
[c]{lll}%
\sum_{i=1}^{\infty}\left \langle |T(x_{i})|,y_{i}^{\ast}\right \rangle  & = &
\underset{n}{\sup}\sum_{i=1}^{n}\left \langle |T(x_{i})|,y_{i}^{\ast
}\right \rangle \\
& \leq & \Vert T\Vert_{\mathcal{D}_{p,q}^{+}}\underset{n}{\sup}\Vert
(x_{i})_{i=1}^{n}\Vert_{q}.\Vert(y_{i}^{\ast})_{i=1}^{n}\Vert_{p^{\ast}%
,\omega}\\
& \leq & \Vert T\Vert_{\mathcal{D}_{p,q}^{+}}\Vert(x_{i})_{i=1}^{\infty}%
\Vert_{q}.\Vert(y_{i}^{\ast})_{i=1}^{\infty}\Vert_{p^{\ast},\omega}.
\end{array}
\]
Since this holds for all $\left(  y_{i}^{\ast}\right)  _{i=1}^{\infty}\in
(\ell_{p^{\ast},|\omega|}(Y^{\ast}))^{+}$, we obtain
\[
\Vert(T(x_{i}))_{i=1}^{\infty}\Vert_{\ell_{p}^{\pi}(Y)}=\underset{\left \Vert
\left(  y_{i}^{\ast}\right)  _{i=1}^{\infty}\right \Vert _{p^{\ast},\omega}%
\leq1}{\sup}\sum_{i=1}^{\infty}\left \langle |T(x_{i})|,y_{i}^{\ast
}\right \rangle \leq \Vert T\Vert_{\mathcal{D}_{p,q}^{+}}\left \Vert \left(
x_{i}\right)  _{i=1}^{\infty}\right \Vert _{q},
\]
and then $\widehat{T}$ is continuous with norm $\leq \Vert T\Vert
_{\mathcal{D}_{p,q}^{+}}$.

In order to prove sufficiency, suppose $\widehat{T}$ is well-defined and
continuous and assume that $T\notin \mathcal{D}_{p,q}^{+}(E,Y).$ Then for each
$n\in \mathbb{N},$ we may choose a finite sequence $\left(  x_{i,n}\right)
_{i=1}^{m_{n}}\subset E$ such that
\[%
\begin{array}
[c]{ccc}%
\left \Vert \left(  x_{i,n}\right)  _{i=1}^{m_{n}}\right \Vert _{q}\leq1 &
\text{and} & \Vert(T(x_{i,n}))_{i=1}^{m_{n}}\Vert_{\ell_{p}^{\pi}(Y)}>2^{n},
\end{array}
\]

which implies
\begin{equation}
\sum_{i=1}^{m_{n}}\langle|T(x_{i,n})|,y_{i,n}^{\ast}\rangle>2^{2n}
\label{etoile5}%
\end{equation}
for some $(y_{i,n}^{\ast})_{i=1}^{m_{n}}\in(\ell_{p^{\ast},|\omega|}(Y^{\ast
}))^{+}$ such that $\left \Vert (y_{i,n}^{\ast})_{i=1}^{m_{n}}\right \Vert
_{p^{\ast},\omega}\leq1.$ Let $(z_{j})_{j=1}^{\infty}$ be the sequence
\begin{align*}
&  \left(  \left(  \frac{x_{i,1}}{2^{1}}\right)  _{i=1}^{m_{1}},\left(
\frac{x_{i,2}}{2^{2}}\right)  _{i=1}^{m_{2}},...,\left(  \frac{x_{i,n}}{2^{n}%
}\right)  _{i=1}^{m_{n}},...\right) \\
&  =\left(  \frac{x_{1,1}}{2^{1}},\frac{x_{2,1}}{2^{1}},...,\frac{x_{m_{1},1}%
}{2^{1}},\frac{x_{1,2}}{2^{2}},\frac{x_{2,2}}{2^{2}},...,\frac{x_{m_{2},2}%
}{2^{2}},...,\frac{x_{1,n}}{2^{n}},\frac{x_{2,n}}{2^{n}},...,\frac{x_{m_{n}%
,n}}{2^{n}},...\right)  .
\end{align*}
We have
\[
\left \Vert (z_{j})_{j}^{\infty}\right \Vert _{q}=\left(  \sum_{j=1}^{+\infty
}\sum_{i=1}^{m_{j}}\left \Vert \frac{x_{i,j}}{2^{j}}\right \Vert ^{q}\right)
^{\frac{1}{q}}=\left(  \sum_{j=1}^{+\infty}\frac{1}{2^{jq}}\left \Vert \left(
x_{i,j}\right)  _{i=1}^{m_{j}}\right \Vert _{q}^{q}\right)  ^{\frac{1}{q}}%
\leq \left(  \sum_{j=1}^{+\infty}\frac{1}{2^{jq}}\right)  ^{\frac{1}{q}}\leq1.
\]
Then, $(z_{j})_{j=1}^{\infty}\in \ell_{q}(E)$. However, $\widehat{T}%
((z_{j})_{j=1}^{\infty})\notin \ell_{p}^{\pi}(Y).$ In order to see this,
consider the sequences
\[
(\varphi_{j})_{j=1}^{\infty}=\left(  \left(  \frac{y_{i,1}^{\ast}}{2^{1}%
}\right)  _{i=1}^{m_{1}},\left(  \frac{y_{i,2}^{\ast}}{2^{2}}\right)
_{i=1}^{m_{2}},...,\left(  \frac{y_{i,n}^{\ast}}{2^{n}}\right)  _{i=1}^{m_{n}%
},...\right)  .
\]
Clearly $(\varphi_{j})_{j=1}^{\infty}\in B_{(\ell_{p^{\ast},|\omega|}(Y^{\ast
}))^{+}}.$ Then
\begin{align*}
\left \Vert (\varphi_{j})_{j=1}^{\infty}\right \Vert _{p^{\ast},|\omega|}  &
=\sup_{y\in B_{Y_{+}}}\left(  \sum_{j=1}^{+\infty}\sum_{i=1}^{m_{j}}\langle
x,\frac{y_{i,j}^{\ast}}{2^{j}}\rangle^{p^{\ast}}\right)  ^{\frac{1}{p^{\ast}}%
}=\sup_{y\in B_{Y_{+}}}\left(  \sum_{j=1}^{+\infty}\frac{1}{2^{jp^{\ast}}}%
\sum_{i=1}^{m_{j}}\langle x,y_{i,j}^{\ast}\rangle^{p^{\ast}}\right)
^{\frac{1}{p^{\ast}}}\\
&  \leq \left(  \sum_{j=1}^{+\infty}\frac{1}{2^{jp^{\ast}}}\right)  ^{\frac
{1}{p^{\ast}}}\leq1.
\end{align*}
By (\ref{etoile5}) it turns out that%
\begin{align*}
\Vert \widehat{T}((z_{j})_{j=1}^{\infty})\Vert_{\ell_{p}^{\pi}(Y)}  &
=\Vert(T(z_{j}))_{j=1}^{\infty})\Vert_{\ell_{p}^{\pi}(Y)}=\underset{\left \Vert
\left(  \xi_{j}\right)  _{j=1}^{\infty}\right \Vert _{p^{\ast},\omega}\leq
1}{\sup}\sum_{j=1}^{\infty}\langle|T(z_{j})|,\xi_{j}\rangle \\
&  \geq \sum_{j=1}^{\infty}\langle|T(z_{j})|,\varphi_{j}\rangle \\
&  =\sum_{j=1}^{\infty}\frac{1}{2^{2j}}\sum_{i=1}^{m_{j}}\langle
|T(x_{i,j})|,y_{i,j}^{\ast}\rangle=\infty
\end{align*}
which according (\ref{ournorm}) is a contradiction with the fact that
$\widehat{T}$ maps $\ell_{q}(X)$ (continuously) into $\ell_{p}^{\pi}(Y).$
Since%
\[
\Vert(T(x_{i}))_{i=1}^{\infty})\Vert_{\ell_{p}^{\pi}(Y)}=\Vert \widehat
{T}((x_{i})_{i=1}^{\infty})\Vert_{\ell_{p}^{\pi}(Y)}\leq \Vert \widehat{T}%
\Vert \left \Vert \left(  x_{i}\right)  _{i=1}^{\infty}\right \Vert _{q},
\]

we have $\Vert T\Vert_{\mathcal{D}_{p,q}^{+}}\leq \Vert \widehat{T}\Vert.$
\end{proof}

In the following result, we characterize the class of positive summing linear
operators and positive strongly summing linear operators by utilizing the
adjoint operator. For the proof of this result, we will utilize the duality of
lattice sequence spaces. Theorem \ref{adjo theorem} was established in \cite[
Theorem 4.6]{achour p}, and the proof provided there is direct. Using Theorem
\ref{lattice dual}, (\ref{b-dual}), Proposition \ref{c-the} and taking into
account that the adjoint of the $\widehat{T}:\ell_{q}(E)\longrightarrow
\ell_{p}^{\pi}(Y)$ can be identified with the operator $\widehat{T^{\ast}%
}:\ell_{p^{\ast},|\omega|}(Y^{\ast})\longrightarrow \ell_{q^{\ast}}(E^{\ast
});\widehat{T^{\ast}}((y_{i}^{\ast})_{i})=\left(  T^{\ast}(y_{i}^{\ast
})\right)  _{i},$ we provide an alternative proof of the results in Theorem
\ref{adjo theorem}.

\begin{theorem}
\label{adjo theorem}Let $T:E\longrightarrow Y$ be an operator and $1\leq q\leq
p\leq \infty.$

\begin{enumerate}
\item[(1)] The operator $T$ belongs to $\Lambda_{p,q}(X,F)$ if and only if its
adjoint $T^{\ast}$ belongs to $\mathcal{D}_{q^{\ast},p^{\ast}}^{+}(F^{\ast
},X^{\ast})$. Furthermore, $\Vert T\Vert_{\Lambda_{p,q}}=\Vert T^{\ast}%
\Vert_{\mathcal{D}_{q^{\ast},p^{\ast}}^{+}}.$

\item[(2)] The operator $T$ belongs to $\mathcal{D}_{p,q}^{+}(E,Y)$ if and
only if its adjoint $T^{\ast}$ belongs to $\Lambda_{q^{\ast},p^{\ast}}%
(Y^{\ast},E^{\ast})$. Furthermore, $\Vert T\Vert_{\mathcal{D}_{p,q}^{+}}=\Vert
T^{\ast}\Vert_{\Lambda_{q^{\ast},p^{\ast}}}.$
\end{enumerate}
\end{theorem}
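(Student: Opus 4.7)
The plan is to deduce both items from the existing characterizations of the two classes via associated sequence operators (Propositions \ref{w-the}, \ref{c-the}, Theorem \ref{u-the}), combined with the dualities $[\ell_p^\pi(X)]^* \equiv \ell_{p^*,|\omega|}(X^*)$ and $[\ell_{p,|\omega|}^u(X)]^* \equiv \ell_{p^*}^\pi(X^*)$ from Theorem \ref{lattice dual}, and the classical $[\ell_p(E)]^* \equiv \ell_{p^*}(E^*)$ from \eqref{b-dual}. The core idea: take the adjoint of the associated operator $\widehat{T}$, identify the resulting dual spaces using these dualities, then verify that under these identifications $(\widehat{T})^*$ coincides with $\widehat{T^*}$, and finally read off the corresponding characterization for $T^*$.

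For part (2), I would start with $T\in\mathcal{D}_{p,q}^+(E,Y)$ which, by Proposition \ref{c-the}, is equivalent to the continuity of $\widehat{T}:\ell_q(E)\to\ell_p^\pi(Y)$ with $\|T\|_{\mathcal{D}_{p,q}^+}=\|\widehat{T}\|$. Its Banach-space adjoint $(\widehat{T})^*:[\ell_p^\pi(Y)]^*\to[\ell_q(E)]^*$ is continuous with the same norm. By Theorem \ref{lattice dual}(i), $[\ell_p^\pi(Y)]^*\equiv \ell_{p^*,|\omega|}(Y^*)$, and by \eqref{b-dual}, $[\ell_q(E)]^*\equiv \ell_{q^*}(E^*)$. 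The key computation is then
\[
\bigl\langle (\widehat{T})^*\bigl((y_i^*)_i\bigr),(x_i)_i\bigr\rangle = \bigl\langle (y_i^*)_i,\widehat{T}((x_i)_i)\bigr\rangle = \sum_i y_i^*(T(x_i))=\sum_i T^*(y_i^*)(x_i),
\]
so that under the identifications $(\widehat{T})^*((y_i^*)_i)=(T^*(y_i^*))_i=\widehat{T^*}((y_i^*)_i)$. Applying Proposition \ref{w-the} to $T^*:Y^*\to E^*$ with exponents $q^*,p^*$ shows $\widehat{T^*}:\ell_{p^*,|\omega|}(Y^*)\to\ell_{q^*}(E^*)$ is continuous iff $T^*\in\Lambda_{q^*,p^*}(Y^*,E^*)$, with matching norms. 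Chaining the identities $\|T\|_{\mathcal{D}_{p,q}^+}=\|\widehat{T}\|=\|(\widehat{T})^*\|=\|\widehat{T^*}\|=\|T^*\|_{\Lambda_{q^*,p^*}}$ yields both the equivalence and the isometry.

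For part (1), I would proceed symmetrically. Starting from $T\in\Lambda_{p,q}(X,F)$, Theorem \ref{u-the} is the right tool because its predual hypothesis $\ell_{q,|\omega|}^u(X)$ is precisely the space whose dual is identified in Theorem \ref{lattice dual}(ii). Thus $\widehat{T}:\ell_{q,|\omega|}^u(X)\to\ell_p(F)$ is continuous with $\|\widehat{T}\|=\|T\|_{\Lambda_{p,q}}$. Taking its adjoint and applying $[\ell_p(F)]^*\equiv\ell_{p^*}(F^*)$ and $[\ell_{q,|\omega|}^u(X)]^*\equiv\ell_{q^*}^\pi(X^*)$, the same kind of pairing calculation identifies $(\widehat{T})^*$ with $\widehat{T^*}:\ell_{p^*}(F^*)\to\ell_{q^*}^\pi(X^*)$. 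Proposition \ref{c-the}, now applied to $T^*$ with exponents $q^*,p^*$, gives $T^*\in\mathcal{D}_{q^*,p^*}^+(F^*,X^*)$ iff this $\widehat{T^*}$ is continuous, with matching norms.

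The only delicate point—and the step I expect to require the most care—is the verification that the algebraic identification $(\widehat{T})^*=\widehat{T^*}$ is compatible with the specific isometric isomorphisms $T_{x^*}\mapsto\bigl((x_n)\mapsto\sum x_n^*(x_n)\bigr)$ from Theorem \ref{lattice dual}; once this is checked by direct computation on finitely supported sequences (which are dense in the relevant preduals $\ell_{q,|\omega|}^u(X)$ and $\ell_q(E)$), the norm equalities follow from the standard $\|\widehat{T}\|=\|(\widehat{T})^*\|$. No extra estimates are needed beyond those already built into the quoted results.
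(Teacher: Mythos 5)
Your proposal follows essentially the same route as the paper's own proof: both parts are obtained by passing to the adjoint of the associated sequence operator $\widehat{T}$ (via Proposition \ref{c-the}, Proposition \ref{w-the} and Theorem \ref{u-the}), identifying the dual sequence spaces through Theorem \ref{lattice dual} and \eqref{b-dual}, and checking by the same pairing computation that $(\widehat{T})^{\ast}$ coincides with $\widehat{T^{\ast}}$ under these identifications. The paper writes this identification as a commuting diagram $\widehat{T}^{\ast}\circ J_{1}=J_{2}\circ\widehat{T^{\ast}}$, but the substance is identical to your argument, including the norm chain $\Vert T\Vert=\Vert\widehat{T}\Vert=\Vert\widehat{T^{\ast}}\Vert=\Vert T^{\ast}\Vert$.
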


\begin{proof}
(1) Let $T\in \mathcal{L}(X,F)$ and $T^{\ast}\in \mathcal{L}(F^{\ast},X^{\ast})$
its adjoint. Suppose that $T\in \Lambda_{p,q}(X,F),$ then by Theorem
(\ref{u-the}) $\widehat{T}:\ell_{q,|\omega|}^{u}(X)\longrightarrow \ell_{p}(F)$
\ is continuous with $\Vert T\Vert_{\Lambda_{p,q}}=\Vert \widehat{T}\Vert.$ By
(\ref{b-dual}) and Theorem (\ref{lattice dual}), the following diagram
commutes
\[%
\begin{array}
[c]{ccc}%
\ell_{p}\left(  F\right)  ^{\ast} & \overset{\widehat{T}^{\ast}}%
{\longrightarrow} & \ell_{q,|\omega|}^{u}(X)^{\ast}\\
J_{1}\uparrow &  & \uparrow J_{2}\\
\ell_{p^{\ast}}(F^{\ast}) & \overset{\widehat{T^{\ast}}}{\longrightarrow} &
\ell_{q^{\ast}}^{\pi}(X^{\ast})
\end{array}
\]
i.e.~$\widehat{T}^{\ast}\circ J_{1}=J_{2}\circ \widehat{T^{\ast}},$ where
$J_{1}$ is an isometric isomorphism and $J_{2}$ is an isometric lattice
isomorphism such that $J_{i}((z_{n}^{\ast})_{n})((z_{n})_{n})=f((z_{n})_{n})=%
{\displaystyle \sum \limits_{n=1}^{\infty}}
\left \langle z_{n},z_{n}^{\ast}\right \rangle ,i=1,2$, with the inverse $I_{i}%
$, defined by $I_{i}\left(  f\right)  =\left(  f\circ I_{n}\right)
_{n}=\left(  z_{n}^{\ast}\right)  _{n}$. In fact, the map $\widehat{T^{\ast}}%
$, defined by $\left(  y_{n}^{\ast}\right)  _{n}\mapsto \left(  T^{\ast}\left(
y_{n}^{\ast}\right)  \right)  _{n}$, let $\left(  y_{n}^{\ast}\right)  _{n}%
\in \ell_{p^{\ast}}(F^{\ast})$, then for all $(x_{n})_{n}\in \ell_{q,|\omega
|}^{u}(X),$%
\begin{align*}
(\widehat{T}^{\ast}\circ J_{1})((y_{n}^{\ast})_{n})\left(  (x_{n})_{n}\right)
&  =J_{1}((y_{n}^{\ast})_{n})\left(  \widehat{T}\left(  (x_{n})_{n}\right)
\right)  \\
&  =J_{1}((y_{n}^{\ast})_{n})\left(  (T(x_{n}))_{n}\right)  \\
&  =%
{\displaystyle \sum \limits_{n=1}^{\infty}}
\left \langle y_{n}^{\ast},T(x_{n})\right \rangle \\
&  =%
{\displaystyle \sum \limits_{n=1}^{\infty}}
\left \langle T^{\ast}(y_{n}^{\ast}),x_{n}\right \rangle \\
&  =J_{2}((T^{\ast}(y_{n}^{\ast}))_{n})\left(  (x_{n})_{n}\right)  \\
&  =J_{2}(\widehat{T^{\ast}}\left(  \left(  y_{n}^{\ast}\right)  _{n}\right)
\left(  (x_{n})_{n}\right)  \\
&  =(J_{2}\circ \widehat{T^{\ast}})\left(  \left(  y_{n}^{\ast}\right)
_{n}\right)  \left(  (x_{n})_{n}\right)  .
\end{align*}
i.e., $\widehat{T}^{\ast}\circ J_{1}=J_{2}\circ \widehat{T^{\ast}}.$ Then,
$\widehat{T}$ is well-defined and continuous if and only if $\widehat{T^{\ast
}}$ is well-defined and continuous. Consequently, from Proposition
\ref{c-the}, it follows that $T$ is positive $\left(  p,q\right)  $-summing,
if and only if its adjoint $T^{\ast}\in \mathcal{L}(F^{\ast},X^{\ast})$ is
strongly positive $\left(  q^{\ast},p^{\ast}\right)  $-summing. Furthermore,
$\Vert T\Vert_{\Lambda_{p,q}}=\Vert T^{\ast}\Vert_{\mathcal{D}_{q^{\ast
},p^{\ast}}^{+}}=\Vert \widehat{T}\Vert.$

(2) Let $T\in \mathcal{D}_{p,q}^{+}(E,Y).$ Then, by Proposition (\ref{c-the}),
the operator $\widehat{T}:\ell_{q}(E)\longrightarrow \ell_{p}^{\pi}(Y)$\ is
continuous with $\Vert T\Vert_{\mathcal{D}_{p,q}^{+}}=\Vert \widehat{T}\Vert.$
Using Theorem (\ref{lattice dual}), and taking into account that the adjoint
of the operator $\widehat{T}:\ell_{q}(E)\longrightarrow \ell_{p}^{\pi}(Y)$ can
be identified with the operator%
\[
\widehat{T^{\ast}}:\ell_{p^{\ast},|\omega|}(Y^{\ast})\longrightarrow
\ell_{q^{\ast}}(E^{\ast})\text{ given by }\widehat{T^{\ast}}((y_{i}^{\ast
})_{i})=\left(  T^{\ast}(y_{i}^{\ast})\right)  _{i},
\]
it follows that $\widehat{T}$ and $\widehat{T^{\ast}}$ are well-defined and
continuous. Therefore, from Proposition (\ref{w-the}) it follows that $T$ is
positive strongly $(p,q)$-summing if and only if its adjoint $T^{\ast}$ is
positive $(q^{\ast},p^{\ast})$-summing, and $\Vert T\Vert_{\mathcal{D}%
_{p,q}^{+}}=\Vert T^{\ast}\Vert_{\Lambda_{q^{\ast},p^{\ast}}}=\Vert \widehat
{T}\Vert.$
\end{proof}

\begin{corollary}
Let $1\leq q\leq p\leq \infty$.\newline

\begin{enumerate}
\item[(1)] The operator $T\in \mathcal{L}(E,Y)$ belongs to $\mathcal{D}%
_{p,q}^{+}(E,Y)$ if and only if $T^{\ast \ast}$ belongs to $\mathcal{D}%
_{p,q}^{+}(E^{\ast \ast},Y^{\ast \ast})$. Furthermore,
\[
\Vert T\Vert_{\mathcal{D}_{p,q}^{+}}=\Vert T^{\ast \ast}\Vert_{\mathcal{D}%
_{p,q}^{+}}.
\]

\item[(2)] The operator $T\in \mathcal{L}(X,F)$ belongs to $\Lambda_{p,q}(X,F)$
if and only if $T^{\ast \ast}$ belongs to $\Lambda_{p,q}(X^{\ast \ast}%
,F^{\ast \ast})$ . Furthermore,
\[
\Vert T\Vert_{\Lambda_{p,q}}=\Vert T^{\ast \ast}\Vert_{\Lambda_{p,q}}.
\]

\end{enumerate}
\end{corollary}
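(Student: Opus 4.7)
The proof is essentially a routine bootstrapping from Theorem \ref{adjo theorem}, which already provides the duality correspondence between positive $(p,q)$-summing and positive strongly $(q^{\ast},p^{\ast})$-summing operators (in both directions). The strategy is to iterate that result: passing from $T$ to $T^{\ast}$ switches between the two classes, and passing once more from $T^{\ast}$ to $T^{\ast \ast}$ switches back to the original class, returning us to the bidual level with identical norms.

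For part (1), the plan is first to apply Theorem \ref{adjo theorem}(2) to $T:E\to Y$, obtaining the equivalence
\[
T\in \mathcal{D}_{p,q}^{+}(E,Y)\ \Longleftrightarrow \ T^{\ast}\in \Lambda_{q^{\ast},p^{\ast}}(Y^{\ast},E^{\ast}),
\]
with $\Vert T\Vert_{\mathcal{D}_{p,q}^{+}}=\Vert T^{\ast}\Vert_{\Lambda_{q^{\ast},p^{\ast}}}$. Then I would apply Theorem \ref{adjo theorem}(1) to the operator $T^{\ast}:Y^{\ast}\to E^{\ast}$, viewing $Y^{\ast}$ as a Banach lattice and $E^{\ast}$ as a Banach space, with parameters $(p,q)$ replaced by $(q^{\ast},p^{\ast})$. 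This yields
\[
T^{\ast}\in \Lambda_{q^{\ast},p^{\ast}}(Y^{\ast},E^{\ast})\ \Longleftrightarrow \ T^{\ast \ast}\in \mathcal{D}_{(p^{\ast})^{\ast},(q^{\ast})^{\ast}}^{+}(E^{\ast \ast},Y^{\ast \ast})=\mathcal{D}_{p,q}^{+}(E^{\ast \ast},Y^{\ast \ast}),
\]
again with equality of the corresponding norms. Chaining the two equivalences gives the desired statement with $\Vert T\Vert_{\mathcal{D}_{p,q}^{+}}=\Vert T^{\ast \ast}\Vert_{\mathcal{D}_{p,q}^{+}}$.

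Part (2) follows by the symmetric argument, applying Theorem \ref{adjo theorem}(1) to $T:X\to F$ to get $T^{\ast}\in \mathcal{D}_{q^{\ast},p^{\ast}}^{+}(F^{\ast},X^{\ast})$, and then Theorem \ref{adjo theorem}(2) to $T^{\ast}$ with indices $(q^{\ast},p^{\ast})$ to transfer back to $T^{\ast \ast}\in \Lambda_{p,q}(X^{\ast \ast},F^{\ast \ast})$, reading off the norm equality at each step.

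The only delicate point is bookkeeping of the conjugate indices: one must verify that $(p^{\ast})^{\ast}=p$ and $(q^{\ast})^{\ast}=q$ place us back in the original range $1\leq q\leq p\leq \infty$, and that the hypotheses of Theorem \ref{adjo theorem} (namely that one of the spaces in each pair is a Banach lattice) are satisfied at both applications — which is automatic because duals of Banach lattices are Banach lattices. No new estimates or tensor arguments are needed.
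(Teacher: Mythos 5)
Your proposal is correct and is precisely the argument the paper intends: the corollary is stated without proof immediately after Theorem \ref{adjo theorem}, as an immediate consequence of applying that theorem's two parts in succession (once to $T$ and once to $T^{\ast}$), with the conjugate indices $(p^{\ast})^{\ast}=p$, $(q^{\ast})^{\ast}=q$ and the norm equalities chaining exactly as you describe.
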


We say that an operators $T:E\longrightarrow Y$ is called positive
$(p,q)$-majorizing (see \cite{chen2} for $p=q$) if there exists a constant
$C>0$ such that%

\begin{equation}
\left(  \sum_{i=1}^{n}|\langle T(z_{i}),y_{i}^{\ast}\rangle|^{q^{\ast}%
}\right)  ^{\tfrac{1}{q^{\ast}}}\leq C\Vert(y_{i}^{\ast})_{i=1}^{n}%
\Vert_{p^{\ast},\omega}\label{aqu majori}%
\end{equation}
for all $(z_{i})_{i=1}^{n}$ in $B_{E}$ and $(y_{i}^{\ast})_{i=1}^{n}$ in
$(Y^{\ast})^{+}$. The space of all positive $(p,q)$-majorizing from $E$ to $Y$
is denoted by $\Upsilon_{p,q}(E,Y)$. this space becomes a Banach space with
the norm $\Vert \cdot \Vert_{\Upsilon_{p,q}}$ given by the infimum of the
constants $C$ satisfying (\ref{aqu majori}). In \cite{chen2}, the authors
proved the duality relationships between positive $p$-summing operators and
positive $p$-majorizing operators. It was known \cite{achour p} that an
operator $T:X\longrightarrow F$ is positive $p$-summing if and only if
$T^{\ast}$ is positive strongly $p^{\ast}$-summing. Similarly, an operator
$T:E\longrightarrow Y$ is positive strongly $p$-summing if and only if
$T^{\ast}$ is positive $p^{\ast}$-summing. In the following, we directly prove
that the concept of positive strongly $p$-summing and the concept of positive
$p$-majorizing are equivalent.

\begin{theorem}
Let $T:E\longrightarrow Y$ be an operator, the following are equivalent:

\begin{enumerate}
\item[(1)] $T$ is positive $(p,q)$-majorizing.

\item[(2)] $T$ is positive strongly $(p,q)$-summing.
\end{enumerate}
\end{theorem}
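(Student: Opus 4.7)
The plan is to prove both implications using the duality pairing between $\ell_q$ and $\ell_{q^{\ast}}$ together with H\"older's inequality. Nothing deeper than a scalar factorization of $x_i = \alpha_i z_i$ with $z_i$ in $B_E$ and $(\alpha_i)$ in $\ell_q$ is needed; the positivity hypothesis on $(y_i^{\ast})$ is carried along untouched since it plays no role in the scalar manipulations.

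For $(2)\Rightarrow(1)$, I will fix $(z_i)_{i=1}^n \subset B_E$ and $(y_i^{\ast})_{i=1}^n \subset (Y^{\ast})^+$ and write the target norm via duality:
\begin{equation*}
\left(\sum_{i=1}^n |\langle T(z_i), y_i^{\ast}\rangle|^{q^{\ast}}\right)^{1/q^{\ast}} = \sup_{\|(\alpha_i)\|_q \leq 1} \sum_{i=1}^n |\alpha_i|\, |\langle T(z_i), y_i^{\ast}\rangle|.
\end{equation*}
Setting $x_i := \alpha_i z_i$ gives a finite sequence in $E$ with $\|(x_i)_{i=1}^n\|_q \leq \|(\alpha_i)\|_q \leq 1$, and $|\alpha_i|\,|\langle T(z_i), y_i^{\ast}\rangle| = |\langle T(x_i), y_i^{\ast}\rangle|$. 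Applying the positive strongly $(p,q)$-summing bound (\ref{st-def}) then taking the supremum over $(\alpha_i)$ yields (\ref{aqu majori}) with the same constant.

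For $(1)\Rightarrow(2)$, I take arbitrary $(x_i)_{i=1}^n \subset E$ and $(y_i^{\ast})_{i=1}^n \subset (Y^{\ast})^+$, set $z_i := x_i/\|x_i\|$ whenever $x_i \neq 0$ (and $z_i := 0$ otherwise) so that $z_i \in B_E$, and write $|\langle T(x_i), y_i^{\ast}\rangle| = \|x_i\|\cdot|\langle T(z_i), y_i^{\ast}\rangle|$. H\"older's inequality in the conjugate indices $q$ and $q^{\ast}$ gives
\begin{equation*}
\sum_{i=1}^n |\langle T(x_i), y_i^{\ast}\rangle| \leq \|(x_i)_{i=1}^n\|_q \cdot \left(\sum_{i=1}^n |\langle T(z_i), y_i^{\ast}\rangle|^{q^{\ast}}\right)^{1/q^{\ast}},
\end{equation*}
and the majorizing hypothesis (\ref{aqu majori}) controls the second factor by $C\|(y_i^{\ast})_{i=1}^n\|_{p^{\ast},\omega}$, which is exactly (\ref{st-def}).

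There is no real obstacle: the only thing to be careful about is that the identification $|\alpha_i|\,|\langle T(z_i), y_i^{\ast}\rangle| = |\langle T(\alpha_i z_i), y_i^{\ast}\rangle|$ and the change of variables $x_i = \alpha_i z_i$ preserve the relevant norms, which is immediate by homogeneity. Since both manipulations are isometric, the norms of $T$ in $\Upsilon_{p,q}(E,Y)$ and in $\mathcal{D}_{p,q}^{+}(E,Y)$ coincide, so the theorem can be stated with equality of norms if desired.
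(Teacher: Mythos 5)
Your proof is correct and follows essentially the same route as the paper's: the implication $(1)\Rightarrow(2)$ via the factorization $x_i=\Vert x_i\Vert\, z_i$ and H\"older's inequality in the indices $q,q^{\ast}$, and the implication $(2)\Rightarrow(1)$ via the $\ell_{q^{\ast}}$--$\ell_q$ duality with the scalars absorbed into the argument of $T$. The only (welcome) refinement is that you explicitly handle the case $x_i=0$, which the paper's normalization $x_i/\Vert x_i\Vert$ glosses over.
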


\begin{proof}
Suppose that $T$ is positive $(p,q)$-majorizing, given any finite sequence
$(x_{i})_{i=1}^{n}$ in $E$ and $(y_{i}^{\ast})_{i=1}^{n}$ in $(Y^{\ast})^{+}$,
we get
\begin{align*}
\sum_{i=1}^{n}|\langle T(x_{i}),y_{i}^{\ast}\rangle| &  =\sum_{i=1}^{n}\Vert
x_{i}\Vert|\langle T(\frac{x_{i}}{\Vert x_{i}\Vert}),y_{i}^{\ast}\rangle|\\
&  \leq \left(  \sum_{i=1}^{n}\Vert x_{i}\Vert^{q}\right)  ^{\frac{1}{q}%
}.\left(  \sum_{i=1}^{n}|\langle T(\frac{x_{i}}{\Vert x_{i}\Vert}),y_{i}%
^{\ast}\rangle|^{q^{\ast}}\right)  ^{\tfrac{1}{q^{\ast}}}\\
&  \leq \Vert T\Vert_{\Upsilon_{p}}.\Vert(x_{i})_{i=1}^{n}\Vert_{q}.\Vert
(y_{i}^{\ast})_{i=1}^{n}\Vert_{p^{\ast},|\omega|}.
\end{align*}
This implies that $T$ is positive strongly $(p,q)$-summing and $\Vert
T\Vert_{D_{p,q}^{+}}\leq \Vert T\Vert_{\Upsilon_{p,q}}.$\newline Conversely,
assume that $T$ is positive strongly  $(p,q)$-summing. Let $(z_{i})_{i=1}^{n}$
be a finite sequence in $B_{E}$ and $(y_{i}^{\ast})_{i=1}^{n}$ in $(Y^{\ast
})^{+}$, we have
\begin{align*}
\left(  \sum_{i=1}^{n}|\langle T(z_{i}),y_{i}^{\ast}\rangle|^{q^{\ast}%
}\right)  ^{\tfrac{1}{q^{\ast}}} &  =\sup_{(\lambda_{i})_{i=1}^{n}\in
B_{\ell_{q}}}|\sum_{i=1}^{n}\lambda_{i}\langle T(z_{i}),y_{i}^{\ast}\rangle|\\
&  =\sup_{(\lambda_{i})_{i=1}^{n}\in B_{\ell_{q}}}|\sum_{i=1}^{n}\langle
T(\lambda_{i}z_{i}),y_{i}^{\ast}\rangle|\\
&  \leq \Vert T\Vert_{D_{p,q}^{+}}.\sup_{(\lambda_{i})_{i=1}^{n}\in B_{\ell
_{q}}}\Vert(\lambda_{i}z_{i})_{i=1}^{n}\Vert_{q}.\Vert(y_{i}^{\ast})_{i=1}%
^{n}\Vert_{p^{\ast},|\omega|}\\
&  \leq \Vert T\Vert_{D_{p,q}^{+}}.\sup_{(\lambda_{i})_{i=1}^{n}\in B_{\ell
_{q}}}\Vert(\lambda_{i})_{i=1}^{n}\Vert_{q}.\Vert(y_{i}^{\ast})_{i=1}^{n}%
\Vert_{p^{\ast},|\omega|}\\
&  =\Vert T\Vert_{D_{p,q}^{+}}.\Vert(y_{i}^{\ast})_{i=1}^{n}\Vert_{p^{\ast
},|\omega|}.
\end{align*}
This means that $T$ is positive  $(p,q)$-majorizing and $\Vert T\Vert
_{\Upsilon_{p,q}}\leq \Vert T\Vert_{D_{p,q}^{+}}.$
\end{proof}

\begin{corollary}
$T\in \mathcal{L}(E,Y)$ is positive $p$-majorizing if and only if $T$ is
positive strongly $p$-summing.
\end{corollary}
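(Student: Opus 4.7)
The plan is simply to specialize the preceding theorem to the case $q = p$. The corollary asserts the equivalence of positive $p$-majorizing and positive strongly $p$-summing operators, which is exactly the content of the theorem when both parameters coincide, so no new work is required.

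More concretely, I would observe that setting $q = p$ in the definition of positive $(p,q)$-majorizing yields the defining inequality
\[
\left(\sum_{i=1}^{n}|\langle T(z_{i}),y_{i}^{\ast}\rangle|^{p^{\ast}}\right)^{\tfrac{1}{p^{\ast}}}\leq C\,\Vert(y_{i}^{\ast})_{i=1}^{n}\Vert_{p^{\ast},\omega}
\]
for $(z_{i})$ in $B_{E}$ and $(y_{i}^{\ast})$ in $(Y^{\ast})^{+}$, which is precisely the condition for $T$ to be positive $p$-majorizing in the sense of \cite{chen2}. Likewise, setting $q = p$ in the definition of positive strongly $(p,q)$-summing recovers the definition of positive strongly $p$-summing from \cite{achour p}.

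Thus the corollary follows directly from the equivalence (1) $\Leftrightarrow$ (2) established in the preceding theorem, together with the identification of norms $\Vert T\Vert_{\Upsilon_{p}} = \Vert T\Vert_{\mathcal{D}_{p}^{+}}$. There is no genuine obstacle; the only thing to verify is that the specialization of the parameters matches the classical single-parameter definitions, which is immediate.
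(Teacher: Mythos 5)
Your proposal is correct and matches the paper's intent exactly: the corollary is stated without proof precisely because it is the specialization $q=p$ of the preceding theorem, which is all you do. Nothing further is needed.
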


\section{Positive Cohen $(p,q)$-nuclear operators}

Cohen \cite{Cohen} introduced the concept of $p$-nuclear operators, which was
extended to the Cohen $(p,q)$-nuclear operators by Apiola \cite{Apiola}. Let
$1<p,q\leq \infty.$ An operator $T\in \mathcal{L}\left(  E,F\right)  $ is Cohen
$(p,q)$-nuclear if $\left(  T\left(  x_{n}\right)  \right)  _{n}\in \ell
_{p}\left \langle F\right \rangle $ whenever $\left(  x_{n}\right)  _{n}\in
\ell_{q,\omega}\left(  E\right)  $. We denote the space of Cohen
$(p,q)$-nuclear operators by $\mathcal{CN}_{\left(  p,q\right)  }\left(
E,F\right)  $. According to \cite{Apiola, Cohen}, the following conditions are
equivalent for a linear operator $T\in \mathcal{L}\left(  E,F\right)  $.%

\begin{equation}
T\in \mathcal{CN}_{\left(  p,q\right)  }\left(  E,F\right)  \Longleftrightarrow
\widehat{T}\in \mathcal{L}\left(  \ell_{q,\omega}\left(  E\right)  ,\ell
_{p}\left \langle F\right \rangle \right)  , \label{Theo App}%
\end{equation}
where $\widehat{T}\left(  \left(  x_{n}\right)  _{n}\right)  =\left(  T\left(
x_{n}\right)  \right)  _{n},$ for every $\left(  x_{n}\right)  _{n}\in
\ell_{q,\omega}\left(  E\right)  .$

In this section, we introduce the positive Cohen $(p,q)$-nuclear operators.
For $p=q$, these operators are closely linked to positive strongly $p$-summing
and positive $p$-summing operators, as stated in Kwapien's Factorization
Theorem (see \cite[Proposition 2]{Kw}). Here, we distinguish three cases:

\begin{definition}
\label{Def-coh}Let $1\leq q\leq p<\infty$ and $X$, $Y$ be Banach lattices, $E$
and $F$ be Banach spaces.

\begin{enumerate}
\item[(a)] \cite{these schoe} \textit{An operator }$T$ from a Banach lattice
$X$ to a Banach space $F$ is left positive Cohen $(p,q)$-nuclear if there
exists a constant $C>0$ such that for all $(x_{i})_{i=1}^{n}\subset X$ (of
positive elements), we have \newline%
\begin{equation}
\Vert(T(x_{i}))_{i=1}^{n}\Vert_{\ell_{p}\langle F\rangle}\leq C\Vert
(x_{i})_{i=1}^{n}\Vert_{q,|\omega|}. \label{cas1}%
\end{equation}

\item[(b)] \textit{An operator }$T$ from a Banach space $E$ to a Banach
lattice $Y$ is right positive Cohen $(p,q)$- nuclear if there exists a
constant $C>0$ such that for all $(x_{i})_{i=1}^{n}\subset E$, we have
\newline%
\begin{equation}
\Vert(T(x_{i}))_{i=1}^{n}\Vert_{\ell_{p}^{\pi}(Y)}\leq C\Vert(x_{i})_{i=1}%
^{n}\Vert_{q,\omega}. \label{cas2}%
\end{equation}

\item[(c)] \textit{An operator }$T$ from a Banach lattice $X$ to a Banach
lattice $Y$ is positive Cohen $(p,q)$- nuclear if there exists a constant
$C>0$ such that for all $(x_{i})_{i=1}^{n}\subset X$, we have \newline%
$\Vert(T(x_{i}))_{i=1}^{n}\Vert_{\ell_{p}^{\pi}(Y)}\leq C\Vert(x_{i}%
)_{i=1}^{n}\Vert_{q,|\omega|}.$ (see \cite[Definition 3.1]{chen2} for $r=1).$

The class of all positive Cohen $(p,q)$-nuclear operators from $X$ to $Y$
(respectively $X$ to $F$ and $E$ to $Y$ ) is denoted by $\mathcal{CN}%
_{p,q}^{+}$ (respectively $\mathcal{CN}_{p,q}^{left,+}(X,F)$ and
$\mathcal{CN}_{p,q}^{right,+}$)\newline we put $\Vert T\Vert_{\mathcal{CN}%
_{p}^{+}}=\inf C.$
\end{enumerate}
\end{definition}

The proof of the following results follows similar lines as in Proposition
\ref{c-the} and Proposition 3.22 in \cite{these schoe} and is omitted.

\begin{proposition}
\label{pro-coh}Let $1\leq q\leq p<\infty$ and $X$, $Y$ be Banach lattices, $E$
and $F$ be Banach spaces.

\begin{enumerate}
\item[(1)] \cite[Proposition 3.22]{these schoe} $T\in \mathcal{CN}%
_{p,q}^{left,+}(X,F);$ if and only if $\widehat{T}:\ell_{q,|\omega
|}(X)\longrightarrow \ell_{p}\langle F\rangle$ is is a well-defined continuous
linear operator.

\item[(2)] $T\in \mathcal{CN}_{p,q}^{right,+}(E,Y),$ if and only if
$\widehat{T}:\ell_{q,\omega}(E)\longrightarrow \ell_{p}^{\pi}(Y)$ is is a
well-defined continuous linear operator.

\item[(3)] $T\in \mathcal{CN}_{p,q}^{+}(X,Y),$ if and only if $\widehat{T}%
:\ell_{q,|\omega|}(X)\longrightarrow \ell_{p}^{\pi}(Y)$ is is a well-defined
continuous linear operator.
\end{enumerate}
\end{proposition}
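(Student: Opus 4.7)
The plan is to prove the three parts in parallel along a single template, since part (1) is already \cite[Proposition 3.22]{these schoe} and the authors explicitly indicate that the arguments follow the pattern of Proposition \ref{c-the}. The key observation is that in each of the three cases the codomain norm admits a duality representation as a supremum over the (positive or full) unit ball of an appropriate space of test sequences, and the equivalence between the class-defining finite-sequence inequality and the global boundedness of $\widehat{T}$ amounts to an interchange of that supremum with an infinite sum.

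For the forward direction (defining inequality $\Rightarrow$ $\widehat{T}$ well-defined and continuous), fix $(x_i)_i$ in the domain sequence space. For parts (2) and (3) I would invoke the representation
\[\|(y_i)_i\|_{\ell_p^\pi(Y)}=\sup_{(y_i^*)_i\in B_{[\ell_{p^*,|\omega|}(Y^*)]^+}}\sum_{i=1}^\infty\langle y_i^*,|y_i|\rangle\]
provided by \eqref{ournorm}; for part (1), the analogous formula for $\|\cdot\|_{\ell_p\langle F\rangle}$ in terms of $B_{\ell_{p^*,\omega}(F^*)}$ comes from \eqref{str-dual}. For any such test sequence $(y_i^*)_i$ and any truncation $N$, the class hypothesis applied to the finite sequence $(x_i)_{i=1}^N$ gives
\[\sum_{i=1}^N\bigl\langle y_i^*,|T(x_i)|\bigr\rangle\leq\|(T(x_i))_{i=1}^N\|_{\mathrm{cod}}\leq C\|(x_i)_{i=1}^N\|_{\mathrm{dom}}\leq C\|(x_i)_i\|_{\mathrm{dom}},\]
where I have used that the truncation of $(y_i^*)_i$ to its first $N$ coordinates has codomain-dual norm $\leq 1$. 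Since every summand is non-negative, the partial sums are monotone; sending $N\to\infty$ yields $\sum_{i=1}^\infty\langle y_i^*,|T(x_i)|\rangle\leq C\|(x_i)_i\|_{\mathrm{dom}}$, and then taking the supremum over test sequences simultaneously establishes that $(T(x_i))_i$ belongs to the codomain space and that $\|\widehat{T}((x_i)_i)\|_{\mathrm{cod}}\leq C\|(x_i)_i\|_{\mathrm{dom}}$.

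The reverse direction is immediate: given any finite sequence $(x_i)_{i=1}^n$, the padded sequence $(x_1,\ldots,x_n,0,0,\ldots)$ belongs to the domain sequence space with norm equal to $\|(x_i)_{i=1}^n\|_{\mathrm{dom}}$, and applying the assumed boundedness of $\widehat{T}$ to this padded sequence recovers the class-defining inequality with constant $\|\widehat{T}\|$. Together with the forward bound this gives $\|T\|_{\mathrm{class}}=\|\widehat{T}\|$ in all three cases. I do not anticipate a serious obstacle: the one point requiring care is the interchange of supremum and infinite sum in the duality representation, which reduces to monotone convergence thanks to the non-negativity of the terms $\langle y_i^*,|T(x_i)|\rangle$. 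Alternatively one could mimic Proposition \ref{c-the} and prove the converse implication by contradiction via a gliding-hump construction $(z_j)=(x_{i,n}/2^n)_{i,n}$, but the direct duality argument above is cleaner and handles the three cases uniformly by changing only the dual space of test sequences.
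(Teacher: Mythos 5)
Your proposal is correct in substance, and the forward direction is exactly the argument the paper has in mind: the paper omits the proof of Proposition \ref{pro-coh}, saying only that it follows the lines of Proposition \ref{c-the}, and there the necessity is proved precisely by your route --- represent the codomain norm as a supremum over the positive (resp.\ full) unit ball of the dual sequence space via \eqref{ournorm} (resp.\ the definition of $\Vert\cdot\Vert_{\langle p\rangle}$), apply the finite-sequence inequality to truncations, and pass to the limit using monotonicity of the non-negative partial sums. Where you genuinely diverge is in the sufficiency direction: the paper's model proof (Proposition \ref{c-the}) argues by contradiction with a gliding-hump sequence $(z_j)=(x_{i,n}/2^n)_{i,n}$, which has the advantage of showing that mere \emph{well-definedness} of $\widehat{T}$ already forces the summing inequality (continuity comes for free), whereas your zero-padding argument is shorter but uses the assumed continuity of $\widehat{T}$ as an explicit hypothesis. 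Since the proposition's statement includes continuity in the hypothesis, your version suffices and is cleaner; the paper's version proves slightly more. One small caveat for part (1): Definition \ref{Def-coh}(a) restricts the defining inequality to finite sequences ``of positive elements,'' so if that restriction is taken literally your padding argument only recovers the inequality for positive sequences, and to get well-definedness of $\widehat{T}$ on all of $\ell_{q,|\omega|}(X)$ in the forward direction you would need to decompose $x_i=x_i^{+}-x_i^{-}$ and use $\Vert(x_i^{\pm})_i\Vert_{q,|\omega|}\leq\Vert(x_i)_i\Vert_{q,|\omega|}$, at the cost of a factor $2$ in the constant; this should be addressed (or the positivity restriction dropped) before asserting the exact norm equality $\Vert T\Vert_{\mathcal{CN}^{left,+}_{p,q}}=\Vert\widehat{T}\Vert$.
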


A result by Apiola states that the adjoint of a Cohen $(p,q)$-nuclear linear
operator is Cohen $(q^{\ast},p^{\ast})$-nuclear linear operator. When $p=q$,
this result appears in \cite{Cohen}. Utilizing Theorem \ref{lattice dual},
(\ref{str-dual}) and (\ref{b-dual}) and taking into account that the adjoint
of the operators $\widehat{T}:\ell_{q,|\omega|}(X)\longrightarrow \ell
_{p}\langle F\rangle,$ $\widehat{T}:\ell_{q,\omega}(E)\longrightarrow \ell
_{p}^{\pi}(Y)$ and $\widehat{T}:\ell_{q,|\omega|}(X)\longrightarrow \ell
_{p}^{\pi}(Y)$ can be identified with the operators%
\[
\widehat{T^{\ast}}:\ell_{p^{\ast},\omega}\left(  F^{\ast}\right)
\longrightarrow \ell_{q^{\ast}}^{\pi}(X^{\ast}),\widehat{T^{\ast}}%
:\ell_{q,|\omega|}\left(  Y^{\ast}\right)  \longrightarrow \ell_{q^{\ast}%
}\left \langle E^{\ast}\right \rangle \text{ and }\widehat{T^{\ast}}%
:\ell_{p^{\ast},|\omega|}\left(  Y^{\ast}\right)  \longrightarrow \ell
_{q^{\ast}}^{\pi}(X^{\ast}),
\]
defined as $\widehat{T^{\ast}}((x_{n}^{\ast})_{n})=(T^{\ast}(x_{n}^{\ast
}))_{n}$, we extend this to positive Cohen $(p,q)$-nuclear operators.

\begin{theorem}
Let $1\leq q\leq p<\infty$ and $X$, $Y$ be Banach lattices, $E$ and $F$ be
Banach spaces .

\begin{enumerate}
\item[(1)] The operator $T$ belongs to $\mathcal{CN}_{p,q}^{left,+}(X,F)$ if
and only if its adjoint $T^{\ast}$ belongs to $\mathcal{CN}_{q^{\ast},p^{\ast
}}^{right,+}(F^{\ast},X^{\ast})$. Furthermore,
\[
\Vert T\Vert_{\mathcal{CN}_{p,q}^{left,+}}=\Vert T^{\ast}\Vert_{\mathcal{CN}%
_{q^{\ast},p^{\ast}}^{right,+}}.
\]

\item[(2)] The operator $T$ belongs to $\mathcal{CN}_{p,q}^{right,+}(E,Y)$ if
and only if its adjoint $T^{\ast}$ belongs to $\mathcal{CN}_{q^{\ast},p^{\ast
}}^{left,+}(Y^{\ast},E^{\ast})$. Furthermore,
\[
\Vert T\Vert_{\mathcal{CN}_{p,q}^{right,+}}=\Vert T^{\ast}\Vert_{\mathcal{CN}%
_{q^{\ast},p^{\ast}}^{left,+}}.
\]

\item[(3)] The operator $T$ belongs to $\mathcal{CN}_{p,q}^{+}(X,Y)$ if and
only if its adjoint $T^{\ast}$ belongs to $\mathcal{CN}_{q^{\ast},p^{\ast}%
}^{+}(Y^{\ast},X^{\ast})$. Furthermore,
\[
\Vert T\Vert_{\mathcal{CN}_{p,q}^{+}}=\Vert T^{\ast}\Vert_{\mathcal{CN}%
_{q^{\ast},p^{\ast}}^{+}}.
\]

\end{enumerate}
\end{theorem}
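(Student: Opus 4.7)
The plan is to mirror the proof of Theorem \ref{adjo theorem}. In each of the three cases, Proposition \ref{pro-coh} reduces membership of $T$ and $T^{\ast}$ in the relevant positive Cohen class to the continuity of associated coordinatewise operators $\widehat{T}$ and $\widehat{T^{\ast}}$ between appropriate sequence spaces. The heart of the argument is that $\widehat{T^{\ast}}$ coincides, under the duality identifications of Theorem \ref{lattice dual}, (\ref{str-dual}), and (\ref{b-dual}), with the Banach-space adjoint of $\widehat{T}$; since an operator is bounded iff its adjoint is bounded, with the same norm, the equivalence and the norm equality follow at once.

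To carry this out concretely for part (1), I would apply Proposition \ref{pro-coh}(1) to translate $T\in \mathcal{CN}_{p,q}^{left,+}(X,F)$ into continuity of $\widehat{T}:\ell_{q,|\omega|}(X)\to \ell_{p}\langle F\rangle$. The adjoint $\widehat{T}^{\ast}$ then acts from $[\ell_{p}\langle F\rangle]^{\ast}$ into $[\ell_{q,|\omega|}(X)]^{\ast}$; by (\ref{str-dual}) the domain is isometric to $\ell_{p^{\ast},\omega}(F^{\ast})$, and Theorem \ref{lattice dual}(ii) provides the isometric lattice isomorphism $\ell_{q^{\ast}}^{\pi}(X^{\ast})\equiv[\ell_{q,|\omega|}^{u}(X)]^{\ast}$. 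A direct bracket computation, analogous to the one produced in the proof of Theorem \ref{adjo theorem}, shows that the resulting square closes up, i.e.\ that $\widehat{T}^{\ast}$ corresponds precisely to $\widehat{T^{\ast}}:\ell_{p^{\ast},\omega}(F^{\ast})\to \ell_{q^{\ast}}^{\pi}(X^{\ast})$. Combined with Proposition \ref{pro-coh}(2) this yields both the equivalence and $\|T\|_{\mathcal{CN}_{p,q}^{left,+}}=\|T^{\ast}\|_{\mathcal{CN}_{q^{\ast},p^{\ast}}^{right,+}}$. Parts (2) and (3) are handled identically: in (2) one dualises $\widehat{T}:\ell_{q,\omega}(E)\to \ell_{p}^{\pi}(Y)$ using (\ref{str-dual}) for the domain and Theorem \ref{lattice dual}(i) for the codomain, while in (3) one dualises $\widehat{T}:\ell_{q,|\omega|}(X)\to \ell_{p}^{\pi}(Y)$ using both parts of Theorem \ref{lattice dual} at once.

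The anticipated obstacle is a mild mismatch in parts (1) and (3): Proposition \ref{pro-coh} is stated on the full lattice $\ell_{q,|\omega|}(X)$, whereas Theorem \ref{lattice dual}(ii) only identifies $\ell_{q^{\ast}}^{\pi}(X^{\ast})$ with the dual of the closed sublattice $\ell_{q,|\omega|}^{u}(X)$. This is reconciled by the same observation used in Theorem \ref{u-the}: the defining inequalities (\ref{cas1}) and that of Definition \ref{Def-coh}(c) involve only finite sequences, which automatically lie in $\ell_{q,|\omega|}^{u}(X)$, so continuity of $\widehat{T}$ on $\ell_{q,|\omega|}(X)$ and on $\ell_{q,|\omega|}^{u}(X)$ is governed by the same constant; a closed-graph argument then passes the duality identification through without loss of norm, completing the proof.
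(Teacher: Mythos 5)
Your proposal follows exactly the route the paper takes: the paper gives no separate proof of this theorem but justifies it in the preceding paragraph by combining Proposition \ref{pro-coh} with the duality identifications of Theorem \ref{lattice dual}, (\ref{str-dual}) and (\ref{b-dual}) to identify $\widehat{T}^{\ast}$ with $\widehat{T^{\ast}}$, precisely as you do. Your explicit reconciliation of the $\ell_{q,|\omega|}(X)$ versus $\ell_{q,|\omega|}^{u}(X)$ mismatch is a detail the paper passes over silently, and it is handled correctly.
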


\begin{remark}
In a recent paper \cite[Definition 3.1]{chen2}, the authors introduced the
concept of positive $(p,q)$-dominance, where $1/p+1/q=1/r$, defined between
Banach lattices. Within this framework, both the Pietsch's Domination Theorem
and Kwapien's Factorization Theorem are established. This concept precisely
aligns with the positive Cohen $p$-nuclear concept presented here when $r=1$.
Thus, by referring the reader to the papers \cite[Theorem 3.3 and Theorem
3.7]{chen2}, we can also derive the well-known theorems, namely Pietsch's
Domination Theorem and Kwapien's Factorization Theorem, for the other two
concepts proposed here (for left and right positive Cohen $p$-nuclear). Notice
that Kwapien's Factorization Theorem ensures that positive Cohen $p$-nuclear
are closely related to positive strongly $p$-summing and positive $p$-summing operators.
\end{remark}

\section{Tensor characterizations}

Now we are interested to characterize the aforementioned classes using
abstract summability properties linked to the continuity of tensor product
operators defined within vector-valued sequence spaces.

\textit{The Wittstock injective tensor Product and Fremlin projective tensor
Product. }For Banach lattices $X$ and $Y$, let $X\otimes Y$ denote the
algebraic tensor product of $X$ and $Y$ . For each $u=\sum_{i=1}^{n}%
x_{i}\otimes y_{i}\in X\otimes Y$ , define $T_{u}:X^{\ast}\rightarrow Y$ by
$T_{u}(x^{\ast})=\sum_{i=1}^{n}x^{\ast}(x_{i})y_{i}$ for each $x^{\ast}\in
X^{\ast}$. The injective cone on $X\otimes Y$ is defined to be%

\[
C_{i}=\left \{  u=\sum_{i=1}^{n}x_{i}\otimes y_{i}\in X\otimes Y:T_{u}(x^{\ast
})\in Y_{+},\forall x^{\ast}\in X_{+}^{\ast}\right \}  .
\]
Wittstock \cite{Wist1, Wist2} introduced the positive injective tensor norm on
$X\otimes Y$ as follows:%

\[
\left \Vert u\right \Vert _{i}=\inf \left \{  \sup \left \{  \left \Vert
T_{v}(x^{\ast})\right \Vert :x^{\ast}\in B_{X_{+}^{\ast}}\right \}  :v\in
C_{i},u\pm u\in C_{i}\right \}
\]
Let $X\widetilde{\otimes}_{i}Y$ denote the completion of $X\otimes Y$ with
respect to $\left \Vert \cdot \right \Vert _{i}$. Then $X\widetilde{\otimes}%
_{i}Y$ with $C_{i}$ as its positive cone is a Banach lattice (also see
\cite[Sect. 3.8 ]{Mey91}), called the Wittstock injective tensor product of
$X$ and $Y$. The projective cone on $X\otimes Y$ is defined to be%

\[
C_{p}=\left \{
{\displaystyle \sum \limits_{i=1}^{n}}
x_{i}\otimes y_{i}:x_{i}\in X_{+},y_{i}\in Y_{+},n\in \mathbb{N}\right \}  .
\]
Fremlin \cite{Fre1, Fre2} introduced the positive projective tensor norm on
$X\otimes Y$ as follows:%

\[
\left \Vert u\right \Vert _{\left \vert \pi \right \vert }=\sup \left \{  \left \vert
{\displaystyle \sum \limits_{i=1}^{n}}
\phi(x_{i},y_{i})\right \vert :u=%
{\displaystyle \sum \limits_{i=1}^{n}}
x_{i}\otimes y_{i}\in X\otimes Y,\phi \in M\right \}
\]
where $M$ is the set of all positive bilinear functional $\phi$ on $X\times Y$
with $\left \Vert \phi \right \Vert \leq1$. Let $X\widehat{\otimes}_{F}Y$ denote
the completion of $X\otimes Y$ with respect to $\left \Vert \cdot \right \Vert
_{\left \vert \pi \right \vert }$. Then $X\widehat{\otimes}_{F}Y$ with $C_{p}$ as
its positive cone is a Banach lattice (also see \cite[Sect. 3.8 ]{Mey91}),
called the Fremlin projective tensor product of $X$ and $Y$. Let $p$ be real
numbers such that $1<p<\infty,$ then, due to \cite[ ]{QG, Lab 04} we have

$(P_{1})$ $\ell_{p,|\omega|}^{u}(X)$ is isometrically lattice isomorphic to
$\ell_{p}\widetilde{\otimes}_{i}X$.

$(P_{2})$ $\ell_{p}^{\pi}(X)$ is isometrically lattice isomorphic to $\ell
_{p}\widehat{\otimes}_{F}X$.

Let $\ell_{p}\widehat{\otimes}_{\epsilon}E$ and $\ell_{p}\widehat{\otimes
}_{\pi}E$ denote the Grothendieck injective and projective tensor product of
$\ell_{p}$ with a Banach space $E$, respectively (see Ryan \cite{Rya}). It is
well known that the space $\ell_{p,\omega}^{u}(E)$ is isometrically isomorphic
to $\ell_{p}\widehat{\otimes}_{\epsilon}E$ whereas $\ell_{p}(X)$ is
isometrically isomorphic to $\ell_{p}\otimes_{\Delta_{p}}X$.(see
\cite[12.9]{teso} and $\ell_{p}\left \langle E\right \rangle $ is isometrically
isomorphic to $\ell_{p}\widehat{\otimes}_{\pi}E$ (see \cite[Proposition 2.2.5
and Proposition 2.2.6]{Cohen}, \cite[Corrolary 3.9]{Fourier} and \cite{Bu D}).
Given a linear operator $T:X\rightarrow Y$, its associated tensor product
operator $I\otimes T:\ell_{p}\otimes X\rightarrow \ell_{p}\otimes Y$ is defined
by
\[
I\otimes T(\sum_{i=1}^{n}e_{i}\otimes x_{i}):=\sum_{i=1}^{n}e_{i}\otimes
T(x_{i}),
\]
and this map is clearly linear.

We apply now Theorem \ref{u-the} and $(P_{1})$ to the class of positive
$(p,q)$-summing operators to get new characterizations in terms of tensor
product transformations\textbf{.}

\begin{corollary}
Let $1<p<\infty$ and $T\in{\mathcal{L}}(X,F)$. The following are equivalent:

\begin{enumerate}
\item $T$ is positive $(p,q)$-summing operator.

\item The induced linear operator $I\otimes T:\ell_{q}\widetilde{\otimes}%
_{i}X\rightarrow \ell_{p}\widehat{\otimes}_{\pi}F$ is continuous.
\end{enumerate}

In this case $\Vert T\Vert_{\Lambda_{p,q}}=\Vert I\otimes T\Vert$.
\end{corollary}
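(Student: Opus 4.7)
The strategy is to reduce the claim to Theorem \ref{u-the} by reinterpreting the tensor product operator $I\otimes T$ as the sequence map $\widehat{T}((x_{i})_{i})=(T(x_{i}))_{i}$ via the isometric identifications of vector-valued sequence spaces with the relevant tensor products that were collected in the paragraphs before the corollary. The whole argument is essentially functorial once the correct translation is in place.

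First, I would invoke property $(P_{1})$ to obtain the isometric lattice isomorphism $\ell_{q}\widetilde{\otimes}_{i}X\cong \ell_{q,|\omega|}^{u}(X)$, under which an elementary tensor $\sum_{i=1}^{n}e_{i}\otimes x_{i}$ corresponds to the finitely supported sequence $(x_{1},\ldots,x_{n},0,\ldots)$. Then I would use the analogous identification for the codomain $\ell_{p}\widehat{\otimes}_{\pi}F$ with the corresponding space of $F$-valued sequences that serves as the target of $\widehat{T}$ in Theorem \ref{u-the}, sending $\sum_{i=1}^{n}e_{i}\otimes y_{i}$ to $(y_{1},\ldots,y_{n},0,\ldots)$. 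On the dense subspace of finitely supported elementary tensors, the defining equality
\[
(I\otimes T)\Bigl(\sum_{i=1}^{n}e_{i}\otimes x_{i}\Bigr)=\sum_{i=1}^{n}e_{i}\otimes T(x_{i})
\]
translates under these identifications into $\widehat{T}(x_{1},\ldots,x_{n},0,\ldots)=(T(x_{1}),\ldots,T(x_{n}),0,\ldots)$, so $I\otimes T$ and $\widehat{T}$ agree on a dense subset of $\ell_{q}\widetilde{\otimes}_{i}X$.

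Since both identifications are isometries, $I\otimes T$ extends to a bounded operator between the completions precisely when $\widehat{T}$ is continuous, and in that case $\Vert I\otimes T\Vert=\Vert \widehat{T}\Vert$. Theorem \ref{u-the} then delivers both the equivalence $(1)\Longleftrightarrow(2)$ and the equality $\Vert T\Vert_{\Lambda_{p,q}}=\Vert \widehat{T}\Vert=\Vert I\otimes T\Vert$. The one technical point that deserves genuine care is verifying that the codomain identification is in fact isometric and matches finitely supported $F$-valued sequences with the corresponding dense subset of $\ell_{p}\widehat{\otimes}_{\pi}F$ in a way compatible with the norm coming from the definition of positive $(p,q)$-summing operators; once this is granted, the passage from the algebraic tensor product to the completion is automatic by density and continuity, and the corollary follows.
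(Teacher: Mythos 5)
Your strategy is the same one the paper uses: it derives the corollary by combining Theorem \ref{u-the} with the identification $(P_{1})$ and the sequence-space/tensor-product dictionary, exactly as you propose, and the domain half of your argument (that $\ell_{q}\widetilde{\otimes}_{i}X\cong\ell_{q,|\omega|}^{u}(X)$ isometrically, matching elementary tensors with finitely supported sequences) is correct and is precisely what $(P_{1})$ provides.

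However, the ``one technical point'' you defer to the end is not a formality; it is a genuine gap, and as the statement is printed it cannot be closed. The target of $\widehat{T}$ in Theorem \ref{u-the} is $\ell_{p}(F)$, whose tensor-product counterpart is $\ell_{p}\otimes_{\Delta_{p}}F$ (as recalled in Section 5, and as in the first bullet of the introduction for absolutely $p$-summing operators). The codomain appearing in the corollary, $\ell_{p}\widehat{\otimes}_{\pi}F$, is instead isometrically isomorphic to the space $\ell_{p}\langle F\rangle$ of Cohen strongly $p$-summable sequences, whose norm $\Vert\cdot\Vert_{\langle p\rangle}$ dominates, and in general strictly exceeds, the $\ell_{p}(F)$-norm (e.g.\ for $F=\ell_{2}$ and $p=2$ one space is the trace class and the other the Hilbert--Schmidt class). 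So the identification you need --- finitely supported sequences in $\ell_{p}(F)$ sitting isometrically inside $\ell_{p}\widehat{\otimes}_{\pi}F$ --- is false, and continuity of $I\otimes T$ into $\ell_{p}\widehat{\otimes}_{\pi}F$ is a strictly stronger requirement than membership of $T$ in $\Lambda_{p,q}(X,F)$: by Proposition \ref{pro-coh} and statement $(a_{1})$ of the final corollary, it characterizes the left positive Cohen $(p,q)$-nuclear operators, a different class. Either the codomain in the statement should be $\ell_{p}\widehat{\otimes}_{\Delta_{p}}F$ (in which case your argument goes through verbatim and yields $\Vert T\Vert_{\Lambda_{p,q}}=\Vert I\otimes T\Vert$), or the equivalence with positive $(p,q)$-summability fails; your proof must take a position on this rather than grant the identification.
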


According to $(P_{1})$ and Theorem \ref{c-the}, we obtain characterizations in
terms of tensor product transformations for the class of positive strongly
$(p,q)$-summing operators

\begin{corollary}
Let $1<p\leq \infty$ and $T\in \mathcal{L}(E,Y)$. The following are equivalent:

\begin{enumerate}
\item $T$ is positive strongly $p$-summing.

\item The induced linear operator $I\otimes T:\ell_{p}\widehat{\otimes
}_{\Delta_{p}}E\rightarrow \ell_{p}\widehat{\otimes}_{F}Y$ is continuous.
\end{enumerate}

In this case $\Vert T\Vert_{\mathcal{D}_{p,q}^{+}}=\Vert I\otimes T\Vert$.
\end{corollary}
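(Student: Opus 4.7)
The plan is to derive this corollary as an immediate consequence of Proposition \ref{c-the} (specialized to the case $q=p$) combined with the two isometric identifications of sequence spaces with tensor products recalled just before the statement. Namely, Proposition \ref{c-the} tells us that $T$ is positive strongly $p$-summing if and only if the sequence operator $\widehat{T}:\ell_{p}(E)\to \ell_{p}^{\pi}(Y)$ is well defined and continuous, with $\Vert T\Vert_{\mathcal{D}_{p,q}^{+}}=\Vert \widehat{T}\Vert$. On the other hand, the paper records the isometric isomorphism $\ell_{p}(E)\equiv \ell_{p}\widehat{\otimes}_{\Delta_{p}}E$ via $(x_{n})_{n}\mapsto \sum_{n}e_{n}\otimes x_{n}$ and, in $(P_{2})$, the isometric lattice isomorphism $\ell_{p}^{\pi}(Y)\equiv \ell_{p}\widehat{\otimes}_{F}Y$ via the analogous assignment. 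The goal is therefore to transfer the equivalence from the sequence-space side to the tensor-product side.

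The key step is to check that, under these two isometric identifications, the operator $\widehat{T}$ is intertwined with $I\otimes T$. I would verify this first on the algebraic tensor product $\ell_{p}\otimes E$: for $u=\sum_{i=1}^{n}e_{i}\otimes x_{i}$, the image in $\ell_{p}(E)$ is the finitely supported sequence $(x_{i})_{i=1}^{n}$, whose image under $\widehat{T}$ is $(T(x_{i}))_{i=1}^{n}$, which in turn corresponds to $\sum_{i=1}^{n}e_{i}\otimes T(x_{i})=(I\otimes T)(u)$ in $\ell_{p}\widehat{\otimes}_{F}Y$. Hence the natural square relating $\widehat{T}$, $I\otimes T$ and the two identifications commutes on the dense subspace $\ell_{p}\otimes E$, and extends by continuity to all of $\ell_{p}\widehat{\otimes}_{\Delta_{p}}E$.

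Once the intertwining is established, the equivalence $(1)\Leftrightarrow(2)$ and the norm identity $\Vert T\Vert_{\mathcal{D}_{p,q}^{+}}=\Vert I\otimes T\Vert$ follow at once: continuity and norm of $I\otimes T$ on the completion coincide with those of $\widehat{T}$ on $\ell_{p}(E)$ because the vertical maps are isometries, and Proposition \ref{c-the} then provides the bridge to positive strong $p$-summability of $T$ with equality of norms.

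The only subtlety, and hence the main obstacle, is ensuring that the bounded extension from the algebraic tensor product to its completion is legitimate: one must know a priori that the linear map $I\otimes T$ is bounded on $\ell_{p}\otimes E$ with respect to $\Vert \cdot \Vert_{\Delta_{p}}$ and $\Vert \cdot \Vert_{|\pi|}$ before invoking density. This, however, is precisely what the commuting square together with the boundedness of $\widehat{T}$ from Proposition \ref{c-the} delivers, so the obstacle dissolves once the diagram chase on $\ell_{p}\otimes E$ is carried out. No new estimates are needed beyond those already available in Proposition \ref{c-the} and $(P_{2})$.
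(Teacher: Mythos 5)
Your proposal is correct and follows essentially the same route as the paper, which derives the corollary from Proposition \ref{c-the} (with $q=p$) together with the isometric identifications $\ell_{p}(E)\equiv \ell_{p}\widehat{\otimes}_{\Delta_{p}}E$ and $\ell_{p}^{\pi}(Y)\equiv \ell_{p}\widehat{\otimes}_{F}Y$; you merely spell out the commuting-square and density argument that the paper leaves implicit. Incidentally, you correctly invoke $(P_{2})$ for the range space, whereas the paper's text cites $(P_{1})$, which appears to be a typo.
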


It is known from \cite[Theorem 2.1.3]{Cohen} that $T\in \mathcal{L}\left(
E,F\right)  $ is Cohen $p$-nuclear if and only if the mapping $I\otimes
T:\ell_{p}\widehat{\otimes}_{\epsilon}E\rightarrow \ell_{p}\widehat{\otimes
}_{\pi}F$\ is continuous. Utilizing Proposition \ref{pro-coh}, $(P_{1})$ and
$(P_{2})$, we extend this result as follows.

\begin{corollary}
Let $1\leq q\leq p<\infty$ and $X$, $Y$ be Banach lattices, $E$ and $F$ be
Banach spaces.

$(a_{1})$ $T\in \mathcal{L}\left(  X,F\right)  $ is left positive Cohen
$(p,q)$-nuclear if and only if the mapping $I\otimes T:\ell_{q}\widetilde
{\otimes}_{i}X\rightarrow \ell_{p}\widehat{\otimes}_{\pi}F$\ is continuous.

$(a_{2})$ $T\in \mathcal{L}\left(  E,Y\right)  $ is positive right Cohen
$(p,q)$- nuclear if and only if the mapping $I\otimes T:\ell_{q}%
\widehat{\otimes}_{\epsilon}E\rightarrow \ell_{p}\widehat{\otimes}_{F}Y$\ is continuous.

$(a_{3})=T\in \mathcal{L}\left(  X,Y\right)  $ is positive Cohen $(p,q)$-
nuclear if and only if the mapping $I\otimes T:\ell_{q}\widetilde{\otimes}%
_{i}X\rightarrow \ell_{p}\widehat{\otimes}_{F}Y$\ is continuous.
\end{corollary}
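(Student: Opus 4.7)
The plan is to prove the three equivalences in parallel by combining Proposition~\ref{pro-coh} with the four isometric (lattice) isomorphisms that identify lattice sequence spaces with suitable tensor product completions. Specifically, I would use $(P_1)$ to identify $\ell_{q,|\omega|}^{u}(X)$ with $\ell_q\widetilde{\otimes}_i X$, $(P_2)$ to identify $\ell_p^{\pi}(Y)$ with $\ell_p\widehat{\otimes}_F Y$, together with the Grothendieck identifications $\ell_{q,\omega}^{u}(E)\equiv \ell_q\widehat{\otimes}_\epsilon E$ and $\ell_p\langle F\rangle\equiv \ell_p\widehat{\otimes}_\pi F$ recalled just before the corollary. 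In each of the three cases, the strategy is to transport the continuity of the sequence operator $\widehat{T}$ (supplied by Proposition~\ref{pro-coh}) through these isomorphisms to the tensor operator $I\otimes T$.

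First I would check the basic compatibility: under each of the above identifications, a finite tensor $\sum_{i=1}^n e_i\otimes x_i$ corresponds to the finite sequence $(x_i)_{i=1}^n$, and $\sum_{i=1}^n e_i\otimes T(x_i)$ corresponds to $(T(x_i))_{i=1}^n$. Hence the canonical tensor operator $I\otimes T$ acting on the algebraic tensor product $\ell_q\otimes X$ (or $\ell_q\otimes E$) coincides with the restriction of $\widehat T$ to finite sequences. The isomorphisms cited above simultaneously give that, on finite sums, the norm in $\ell_q\widetilde{\otimes}_i X$ (respectively $\ell_q\widehat{\otimes}_\epsilon E$) is $\|(x_i)_{i=1}^n\|_{q,|\omega|}$ (respectively $\|(x_i)_{i=1}^n\|_{q,\omega}$), while the norm in $\ell_p\widehat{\otimes}_\pi F$ (respectively $\ell_p\widehat{\otimes}_F Y$) is $\|(T(x_i))_{i=1}^n\|_{\ell_p\langle F\rangle}$ (respectively $\|(T(x_i))_{i=1}^n\|_{\ell_p^{\pi}(Y)}$).

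Given these identifications, the three cases unfold mechanically. For $(a_1)$, continuity of $I\otimes T:\ell_q\widetilde{\otimes}_i X\to \ell_p\widehat{\otimes}_\pi F$ on the dense algebraic tensor product becomes the estimate $\|(T(x_i))_{i=1}^n\|_{\ell_p\langle F\rangle}\leq C\|(x_i)_{i=1}^n\|_{q,|\omega|}$, which is exactly inequality \eqref{cas1} defining the left positive Cohen $(p,q)$-nuclear class, and by Proposition~\ref{pro-coh}(1) this is the same as $\widehat T:\ell_{q,|\omega|}(X)\to\ell_p\langle F\rangle$ being continuous. Case $(a_2)$ is analogous, using Proposition~\ref{pro-coh}(2) together with $\ell_{q,\omega}^{u}(E)\equiv\ell_q\widehat{\otimes}_\epsilon E$ and $(P_2)$ to match \eqref{cas2}. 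Case $(a_3)$ combines $(P_1)$ and $(P_2)$ with Proposition~\ref{pro-coh}(3). Equality of norms $\|T\|_{\mathcal{CN}_{p,q}^{\cdot,+}}=\|I\otimes T\|$ follows from the fact that all intermediate maps are isometries.

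The only subtle point is the mismatch between $\ell_{q,|\omega|}(X)$ appearing in Proposition~\ref{pro-coh} and $\ell_{q,|\omega|}^{u}(X)$ appearing in $(P_1)$ (and similarly between $\ell_{q,\omega}(E)$ and $\ell_{q,\omega}^{u}(E)$ in $(a_2)$). This is harmless because the defining inequalities \eqref{cas1}, \eqref{cas2}, and the one in $(c)$ of Definition~\ref{Def-coh} are quantified over finite sequences, on which the two norms coincide; finite sequences are dense in the $u$-space, which is in turn a closed sublattice of the ambient lattice sequence space. So a uniform bound on finite sequences is equivalent to continuity of $\widehat T$ on either of the two spaces, and equivalent to continuity of $I\otimes T$ on the corresponding tensor product completion by standard density. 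No other obstacle is anticipated; the argument is a direct transport of an inequality along a commuting square of isometries.
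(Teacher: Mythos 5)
Your proposal is correct and follows exactly the route the paper intends: the corollary is stated there without a written proof, only the remark that it follows from Proposition \ref{pro-coh} together with $(P_1)$, $(P_2)$ and the Grothendieck identifications, which is precisely the commuting-square-of-isometries argument you carry out. Your explicit handling of the mismatch between $\ell_{q,|\omega|}(X)$ and $\ell_{q,|\omega|}^{u}(X)$ (reducing everything to the defining inequalities on finite sequences, where the norms agree, and then using density) is a detail the paper leaves implicit, and you resolve it correctly.
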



\begin{thebibliography}{99}                                                                                               %


\bibitem {achour p}D. Achour and A. Belacel. Domination and factorization
theorems for positive strongly p-summing operators. Positivity 18 (2014) 785-804

\bibitem {Alouani}D. Achour, A. Alouani, P. Rueda and E. A. S\'{a}nchez
P\'{e}rez. Tensor characterizations of summing polynomials. Mediterr. J. Math.
15, 127 (2018). https://doi.org/10.1007/s00009-018-1175-z

\bibitem {Apiola}H. Apiola. Duality between spaces of $p$-summable sequences,
$(p,q)$-summing operators and characterizations of nuclearity. Math. Ann. 219
(1976) 53--64

\bibitem {OB}O. Blasco. A class of operators from a Banach lattice into a
Banach space. Collectanea mathematica Vol XXXVII, Fasciculo. 1 (1986) 13-22.

\bibitem {QG}Q. Bu, G. Buskes. The Radon-Nikodym property for tensor products
of Banach lattices. Positivity 10(2) (2006) 365-390

\bibitem {Bu D}Q. Bu, J. Diestel, Observations about the projective tensor
product of Banach spaces, I. $\ell_{p}\widehat{\otimes}X$, $1<p<\infty$,
Quaestiones Math. 24 (2001) 519--533.

\bibitem {chen2}D. Chen, A. Belacel and J.A. Ch\'{a}vez-dom\'{\i}nguez.
Positive $p$-summing operators and disjoint $p$-summing operators. Positivity,
25 (2021) 1045--1077

\bibitem {Cohen}J. S. Cohen, Absolutely $p$-summing, $p$-nuclear operators and
their conjugates. Math.Ann. 201 (1973) 177--200

\bibitem {tenso}A. Defant, K. Floret. Tensor norms and operator ideals.
North-Holland Mathematics Studies, vol. 176. North-Holland, Amsterdam (1993)

\bibitem {absol}J. Diestel, H. Jarchow. A. Tonge, Absolutely Summing
Operators. Cambridge University Press, Cambridge 1995.

\bibitem {bu}J. Donghai, M. Craddock and Q. Bu. Reflexivity and the
Grothendieck property for positive tensor products of Banach lattices-I.
Positivity 14.1 (2010) 59-68

\bibitem {Fourier}Jan H. Fourie and Ilse M. R\"{o}ntgen. Banach space
sequences and projective tensor products. J. Math. Anal. Appl. 277 (2003) 629--644

\bibitem {Fre1}D.H. Fremlin. Tensor products of Archimedean vector lattices.
Am. J. Math. 94 (1972) 778--798

\bibitem {Fre2}D.H. Fremlin. Tensor products of Banach lattices. Math. Ann.
211 (1974) 87--106

\bibitem {J}H. Jarchow. Locally convex spaces. B.G. Teubner, Stuttgart, 1981.

\bibitem {Kw}S. Kwapien. On operators factorizable through $L_{p}$ space.
Bull. Soc. math. France. Memoire 31-32 (19T2) 215-225.

\bibitem {Lab 04}C. C. A. Labuschagne. Riesz reasonable cross norms on tensor
products of Banach lattices. Quaestiones Mathematicae 27 (2004) 243-266.

\bibitem {LT 96}J. Lindenstrauss and L.Tzafriri. Classical Banach Spaces I and
II. Springer-Verlag, Berlin 1996.

\bibitem {Mey91}P. Meyer-Nieberg. Banach lattices. Springer-Verlag, Berlin, 1991.

\bibitem {Pie}A. Pietsch, Operator Ideals, Deutsch. Verlag Wiss, Berlin
(1978), North-Holland, Amsterdam-London-New York-Tokyo (1980)

\bibitem {Rya}R.A. Ryan. Introduction to Tensor Products of Banach Spaces.
Springer, 2002.

\bibitem {Sch74}H.H. Schaefer. Banach Lattices and Positive Operators.
Springer-Verlag, Berlin/Heidelberg/New York, 1974.

\bibitem {these schoe}I. M. Schoeman. A theory of multiplier functions and
sequences and its applications to Banach spaces. North-Holland Mathematics
Studies. Diss. North-West University (2005).

\bibitem {Wist1}G. Wittstock. Eine Bemerkung \"{u}ber Tensorprodukte von
Banachverb\"{a}nden. Arch. Math. 25 (1974) 627--634

\bibitem {Wist2}G. Wittstock. Ordered normed tensor products. Lecture Notes in
Physics, vol. 29. Springer, Berlin (1974) 67--84

\bibitem {Zhu 98}O. I. Zhukova. On modifications of the classes of
$p$-nuclear, $p$-summing and $p$-integral operators. Siberian Mathematical
Journal, Vol. 30 No. 5, (1998) 894-907.
\end{thebibliography}
\end{document}